\def\g{\mathfrak g}      
\def\h{\mathfrak h}      
\def\u{\mathfrak u}      
\def\t{\mathfrak t}      
\def\k{\mathfrak k}      
\def\n{\mathfrak n}      
\def\p{\mathfrak p}      
\def\a{\mathfrak a}
\def\m{\mathfrak m}
\def\w{\mathbf w}        
\def\R{\mathbb R}        
\def\C{\mathbb C}        
\def\SU{\mathrm{SU}}     
\def\U{\mathrm{U}}       
\def\SL{\mathrm{SL}}     
\renewcommand{\sl}{\mathfrak{sl}}   
\newtheorem{theorem}{Theorem}[section]
\newtheorem{lemma}{Lemma}[section]
\newtheorem{corollary}{Corollary}[section]
\newtheorem{proposition}{Proposition}[section]
\theoremstyle{definition}
\newtheorem{example}{Example}[section]
\newtheorem*{remarks}{Remarks}
\theoremstyle{remark}
\newtheorem{remark}{Remark}[section]
\numberwithin{equation}{section}
\begin{document}
\title{Noncompact Groups of Inner Type and Factorization}
\author{Arlo Caine}
\email{jacaine@cpp.edu}

\author{Doug Pickrell}
\email{pickrell@math.arizona.edu}

\begin{abstract}
We investigate Birkhoff (or triangular) factorization and (what we propose to call) root subgroup factorization for elements of a noncompact simple Lie group $G_0$ of inner type.  For compact groups root subgroup factorization is related to Bott-Samelson
desingularization, and many striking applications have been discovered by Lu (\cite{Lu}).  In this paper, in the inner noncompact case, we obtain parallel characterizations of the Birkhoff components of $G_0$ and an analogous construction of root subgroup coordinates for the Birkhoff components.  As in the compact case, we show that the restriction of Haar measure to the top Birkhoff component is a product measure in root subgroup coordinates.
\end{abstract}

\maketitle
\{2000 Mathematics Subject Classifications:  22E67\}

\setcounter{section}{-1}


\section{Introduction}

Given a semisimple Lie algebra $\m$ over $\R$, write $\g$ for the complexification $\m^\C$ of $\m$ and let $G$ denote the simply connected complex Lie group with Lie algebra $\g$.  Let $M$ denote the connected subgroup of $G$ with Lie algebra $\m$.  This paper concerns the decomposition of $M$ induced from a Birkhoff decomposition of $G$. More specifically, fix a triangular decomposition $\g=\n^-+\h+\n^+$ (which is compatible with $\m$ in a sense described below).  With $H=\exp(\h)$, $N^\pm=\exp(\n^\pm)$, and $W=N_G(H)/H$, we obtain a Birkhoff decomposition of $G$
\[
G=\bigsqcup_{w\in W}\Sigma_w^G \text{ where }\Sigma_w^G=N^-wHN^+
\]
and we are interested in the induced decomposition of $M$
\[
M=\bigsqcup_{w\in W}\Sigma_w^M \text{ where }\Sigma_w^M=\Sigma_w^G\cap M.
\]

The case where $\m=\u$ is of compact type is well-known. Assume that the triangular decomposition is compatible with $\u$
in the sense that $\t:=\u\cap \h$ is maximal abelian in $\u$. In this event each component $\Sigma_w^U$ is connected and diffeomorphic to a product of an affine space with the compact torus $T=\exp(\t)$.  A reinterpretation of (Bott-Samelson or Soibelman or) Lu coordinates on Schubert cells in $U/T$ (see \cite{Lu} and chapter 5 of \cite{CP}) leads to an explicit real-algebraic parameterization of the affine factor by a product of affine planes via a map which depends on a reduced decomposition of $w$ into a product of simple reflections. One of several remarkable features of this parameterization is that for the open dense component $\Sigma_1^U$, the restriction of Haar measure on $U$ factors as a product of explicit measures on each of planar factors together with the Haar measure on $T$. There is one complex parameter $\zeta_\tau$ for each positive root $\tau$, which we call ``root subgroup coordinates,'' and an element $t\in T$ determining $g\in \Sigma_1^U$.
When all the roots have the same length, in terms of these parameters, the restriction of Haar measure on $U$ to $\Sigma_1^U$ is given by
\begin{equation}\label{compact_Haar_formula}
d\lambda_{U}(g)=d\lambda_T(t)\prod_{\tau>0}\frac{|d\zeta_\tau|}{(1+|\zeta_\tau|^2)^{1+height(\tau)}}
\end{equation}
up to a multiplicative constant.

In this paper, we are mainly interested in the case $\m=\g_0$, a non-compact Lie algebra which is in duality with $\u$, in the
sense of symmetric space theory. Riemannian symmetric spaces come in dual pairs, one of compact
type and one of noncompact type. Given such a pair, there is a
diagram of finite dimensional groups
\begin{equation}\label{fdgroupdiagram}
\xymatrix{ & G & \\ G_0 \ar[ur] &  & U \ar[ul] \\
 & K \ar[ur] \ar[ul] & }
\end{equation}
where $U$ is the universal covering of the identity component of the isometry group
of the compact type symmetric space $X\simeq U/K$, $G$ is the complexification of $U$, and
$G_0\subset G$ is the isometry group for the dual noncompact
symmetric space $X_0=G_0/K$.  The fundamental example is the data determined by the Riemann sphere and the Poincar\'e disk. For this pair, the diagram (\ref{fdgroupdiagram}) becomes
\begin{equation}\label{rankonegroupdiagram}
\xymatrix{ & \SL(2,\mathbb C) & \\
\SU(1,1) \ar[ur] & & \SU(2) \ar[ul] \\
& \mathrm{S}(\U(1)\times \U(1)) \ar[ur] \ar[ul] & }
\end{equation}

Throughout this paper we assume that the Cartan involution $\Theta$ fixing $K$ in $G_0$ is inner; this is equivalent to a number of other conditions:
\begin{list}{$\bullet$}{}
\item $\mathrm{rank}(K)=\mathrm{rank}(G_0)$
\item $G_0$ has discrete
series unitary representations;
\item (if $\g_0$ is simple) $C(K)=S^1$;
\item the
quotients $U/K$ and $G_0/K$ are Hermitian symmetric.
\end{list}
We also assume that the triangular decomposition of $\g=\n^-+\h+\n^+$ is $\Theta$-stable.

With these assumptions, we show that the Birkhoff component $\Sigma_w^{G_0}$ (when nonempty) is connected and diffeomorphic to a product of a contractible bounded complex domain with the torus $T$. We then introduce ``root subgroup coordinates,'' which depends on a reduced decomposition of $w$ into a product of simple reflections.  A key difference between this non-compact inner case and the compact case is that in the non-compact case roots are of two types: compact and non-compact.  Correspondingly, the domain of the parameterization must be altered from a product of affine planes to a product of planes and disks.  As in the compact case, for the open dense component $\Sigma_1^{G_0}$, Haar measure for $G_0$ is equivalent to a product of explicit measures on the planar and disk factors together with Haar measure for $T$.  There is one complex parameter $\zeta_\tau$ for each positive root $\tau$ and an element $t\in T$ determining $g\in \Sigma_1^{G_0}$. When $\tau$ is of compact type, $\zeta_\tau$ is unrestricted in $\C$, while when $\tau$ of non-compact type $\zeta_\tau$ is restricted to the unit disk in $\C$. Assuming all roots are of the same length, in terms of these parameters, the restriction of Haar measure to $\Sigma_1^{G_0}$ is given by
\begin{equation}\label{noncompact_Haar_formula}
d\lambda_{G_0}(g)=d\lambda_T(t)\prod_{\tau>0}\frac{|d\zeta_\tau|}{(1\pm|\zeta_\tau|^2)^{1+height(\tau)}}
\end{equation}
up to a multiplicative constant, where the plus (resp. minus) sign is assigned to $\pm|\zeta_\tau|^2$ if $\tau$ is of compact (reps. non-compact) type.

\subsection{Plan of the Paper}

Section \ref{background} establishes the notation used throughout the paper, consolidating the data used to define Birkhoff factorization and root subgroup factorization into one place in the paper for ease of reference.  Section \ref{factorization_section} concerns Birkhoff decomposition for the groups $U$ and $G_0$ and root subgroup coordinates for the contractible factors of $\Sigma_{w}^U$ and $\Sigma_{w}^{G_0}$. The compact case is relatively well-understood, thanks in large part to Lu (see especially \cite{Lu}). We review and reinterpret this work, with emphasis on the algorithm for root subgroup factorization (which depends on an ordering of non-inverted roots), and then extend it to define root subgroup coordinates for the affine factor of $\Sigma_w^{G_0}$.  The algebra of factorization in the noncompact inner case largely reduces to the compact case, because of the existence of a ``block (or coarse) triangular decomposition." However, there is one part of the argument
in the non-compact case which is not algebraic: this is in showing that everything in a component $\Sigma^{G_0}_w$ has a root subgroup factorization. The paper concludes in Section \ref{Haarmeasuresection} with the computation of the explicit formula (\ref{noncompact_Haar_formula}) for the restriction of Haar measure on $G_0$ to $\Sigma_1^{G_0}$ in terms of root subgroup coordinates.

\section{Notation and Background\label{background}}

Let $ \m$ be a simple Lie algebra over $\R$ and write $\g$ for the complexification $\m^\mathbb C$ of $\m$.  Let $ G$ be the simply connected complex Lie group with Lie algebra $\g$ and let $ M$ denote the connected real subgroup of $ G$ with Lie algebra $\m\subset\g$.  In this section, we will establish notation for studying factorization of elements of $ M$ relative to a Birkhoff decomposition of $ G$. We are mainly interested in the case $\m= \g_0$, a simple noncompact Lie algebra over $\R$, which is
equipped with a Cartan involution $\Theta$ of inner type.

\subsection{Data determined by the choice of a Cartan involution}
The choice of a Cartan involution $\Theta$ on $\g_0$ determines a maximal compact Lie subalgebra $\k$ of $\g_0$.  Let $\sigma$ denote the canonical complex conjugation on $\g$ fixing $\g_0$.  If we extend $\Theta$ to $\g$ in a complex linear fashion then the composition $\tau=\sigma\circ \Theta$ is a complex conjugation on $\g$ fixing a compact real form $\u$ of $\g$.  The extended involution $\Theta$ on $\g$ stabilizes $\u$ and fixes $\k$ inside of $\u$.  Thus, $\g_0\cap\u=\k$.  The assumption that $\g_0$ is of inner type is equivalent to the condition that
\begin{equation}\label{rank_condition}
\mathrm{rank}(\g_0)=\mathrm{rank}({\k})=\mathrm{rank}(\u).
\end{equation}

We write $\g_0=\k+\p$ for the decomposition of $\g_0$ into the eigenspaces of $\Theta$ on $\g_0$.  Then $\u=\k+i\p$ where multiplication by $i$ denotes the canonical complex structure on $\g$, and this is the decomposition of $\u$ into the eigenspaces of the extension of $\Theta$ restricted to $\u$.

Let $ U$ and $ K$ denote the connected subgroups of $ G$ having Lie algebras $ \u$ and $ \k$. Then we obtain a diagram of Lie algebras and corresponding connected Lie groups.
\[
\xymatrix{
 & {\g} & & & & {G} & \\
{\g}_0 \ar[ur] & & {\u} \ar[ul] & & {G}_0 \ar[ur] & & {U} \ar[ul] \\
& {\k} \ar[ur] \ar[ul] & & & & {K} \ar[ur] \ar[ul]&
}
\]
We will also use $\Theta$ to denote the corresponding holomorphic involution of $ G$, and its restrictions to $ G_0$ and $ U$, which fixe $K$ in $ G_0$ and $ U$, respectively.

\subsection{Data determined by the choice of a $\Theta$-stable Cartan subalgebra and a Weyl chamber in the Inner Case}

Fix a Cartan subalgebra ${\t}\subset {\k}$.
Because of our rank assumption (\ref{rank_condition}), ${\t}$ is a
$\Theta$-stable Cartan subalgebra of ${\g}_0$ and every $\Theta$-stable Cartan subalgebra of $\g_0$ is of this form.  In addition, $\t$ is a
$\Theta$-stable Cartan subalgebra of ${\u}$, and its
centralizer ${\h}$ in ${\g}$ is a $\Theta$-stable Cartan
subalgebra of ${\g}$.  We write ${\h}={\t}+\a$, where $\a=i\t$,
for the eigenspace decomposition of $\h$ under $\Theta$ and let $ H=\exp(\h)$, $ T=\exp(\t)$, and $ A=\exp(\a)$, respectively.

We will use $ W:=N_{ U}( T)/ T$ as a model for the Weyl group of $(\g,\h)$.  The choice of a Weyl chamber $C$ in $\a$ determines a choice of positive roots for the action of $\h$ on $\g$. Let $\n^+$ denote the sum of the root spaces indexed by positive roots and $\n^-$ denote the sum of the root spaces indexed by negative roots.  In this way, the choice of a $\Theta$-stable Cartan subalgebra $\t$ of $\g_0$ and a Weyl chamber $C$ determines a triangular decomposition
\begin{equation}\label{triangledecomp}
{\g}={\n}^{-}+ {\h} + {\n}^{+}.
\end{equation}

Set $ N^\pm=\exp(\n^\pm)$.  Then $ B^+= H N^+$ and $ B^-= N^- H$ are a pair of opposite Borel subgroups of $ G$.

A consequence of the stability of $\h$ under $\sigma$ and $\tau$ is the fact that $\sigma(\n^\pm)=\n^\mp$ and $\tau(\n^\pm)=\n^\mp$.

\begin{example}\label{su(1,1)conventions}
In this paper, a special role is played by the rank 1 example of
\[
\mathfrak{su}(1,1)=\left\{\begin{pmatrix}iz & x+iy \\ x-iy & -iz\end{pmatrix}\colon x,y,z\in\R\right\}
\]
with Cartan involution given by
\[
\mathrm{Ad}_g\text{ where }g=\begin{pmatrix} i & 0 \\ 0 & -i\end{pmatrix}.
\]
The complexification is $\mathfrak{sl}(2,\C)$ and the associated compact real form is $\mathfrak{su}(2)$.  The involution fixing $\mathfrak{su}(2)$ is $X\mapsto -X^*$ (opposite conjugate transpose).  The effect of this involution is to negate the off-diagonal entries.  In this case, the maximal compact subalgebra fixed by the involution is the one dimensional subalgebra $\mathfrak{s}(\mathfrak{u}(1)\times \mathfrak{u}(1))$ of diagonal matrices in $\mathfrak{su}(1,1)$. This subalgebra, which is abelian and hence also a Cartan subalgebra, then determines the standard triangular decomposition
\begin{equation}\label{std_tri_decomp_sl(2,C)}
\mathfrak{sl}(2,\C)=\mathrm{span}_\C\left\{\begin{pmatrix} 0& 0 \\ 1 & 0 \end{pmatrix}\right\}+\mathrm{span}_\C\left\{\begin{pmatrix} 1 & 0 \\ 0 & -1 \end{pmatrix}\right\}+\mathrm{span}_\C\left\{\begin{pmatrix} 0 & 1 \\ 0 & 0 \end{pmatrix}\right\}.
\end{equation}
\end{example}

\subsection{Root Data}
Let $\theta$ denote the highest root and normalize the Killing form so that (for the dual form) $\langle \theta,\theta\rangle=2$.  For each root $\alpha$ let $h_{\alpha}\in\a$ denote the associated coroot (satisfying $\alpha(h_{\alpha})=2$).  The inner type assumption, together with the $\Theta$-stability of $\h$, implies that each root space $\g_{\alpha}$ is contained in either $\k^\C$ or in $\p^\C$ and thus the roots can be sorted into two types.  A root $\alpha$ is of \emph{compact type} if the root space $\g_{\alpha}$ is a subset of $\k^\C\subset\g$ and of \emph{noncompact type} otherwise, i.e., when $\g_{\alpha}\subset\p^\C$.
The following is elementary.

\begin{proposition} Suppose $\gamma$ is a positive root, and consider the root homomorphism $\iota_{\gamma}\colon \mathfrak{sl}(2,\C)\to\g_{-\gamma}\oplus\mathbb C h_{\gamma}\oplus\g_{\gamma}$ with $\iota_{\gamma}(diag(1,-1))=h_{\gamma}$ and which carries the standard triangular decomposition of $\mathfrak{sl}(2,\C)$ (\ref{std_tri_decomp_sl(2,C)}) into the triangular decomposition $\g=\n^-+\h+\n^+$. Then
\begin{enumerate}
\item[(a)] $\iota_{\gamma}\colon \mathfrak{su}(2)\to \u$;
\item[(b)] when $\gamma$ is of compact type, $\iota_{\gamma}\colon \mathfrak{su}(2)\to\k$;
\item[(c)] when $\gamma$ is of noncompact type, $\iota_{\gamma}\colon \mathfrak{su}(1,1)\to \g_0$.
\end{enumerate}
\end{proposition}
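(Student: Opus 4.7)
The plan is to build $\iota_\gamma$ from a suitably normalized pair of root vectors and then verify the three parts by direct checks of $\tau$- and $\sigma$-invariance. As preparation, I would pick $E_\gamma\in\g_\gamma$ and $E_{-\gamma}\in\g_{-\gamma}$ with $[E_\gamma,E_{-\gamma}]=h_\gamma$; the remaining scalar freedom $E_\gamma\mapsto\lambda E_\gamma$, $E_{-\gamma}\mapsto\lambda^{-1}E_{-\gamma}$ (which preserves the bracket relation) is used to arrange $\tau(E_\gamma)=-E_{-\gamma}$. Then define $\iota_\gamma$ to send the standard $\mathfrak{sl}(2,\C)$ triple $(H,E,F)$ appearing in (\ref{std_tri_decomp_sl(2,C)}) to $(h_\gamma,E_\gamma,E_{-\gamma})$; the bracket relations $[h_\gamma,E_{\pm\gamma}]=\pm 2 E_{\pm\gamma}$ hold since $\gamma(h_\gamma)=2$, and together with $[E_\gamma,E_{-\gamma}]=h_\gamma$ make $\iota_\gamma$ into a Lie algebra homomorphism of the required form.

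For (a), I would note that $\mathfrak{su}(2)$ is the real span in $\mathfrak{sl}(2,\C)$ of $\{iH,\,E-F,\,i(E+F)\}$, which maps to $\{ih_\gamma,\,E_\gamma-E_{-\gamma},\,i(E_\gamma+E_{-\gamma})\}$. Because $h_\gamma\in\a=i\t$ and $\tau$ acts as $-1$ on $\a$, we have $\tau(h_\gamma)=-h_\gamma$; combined with $\tau(E_\gamma)=-E_{-\gamma}$ (which upon applying $\tau$ again yields $\tau(E_{-\gamma})=-E_\gamma$), each of these three images is manifestly $\tau$-fixed and therefore lies in $\u$.

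Parts (b) and (c) then follow by feeding $\Theta$ into the same check. Since $\g_{\pm\gamma}$ is one-dimensional and lies in either $\k^\C$ or $\p^\C$, there is a sign $\epsilon=\pm 1$ (equal to $+1$ for compact $\gamma$ and $-1$ for noncompact $\gamma$) with $\Theta(E_{\pm\gamma})=\epsilon E_{\pm\gamma}$; moreover $\Theta(h_\gamma)=h_\gamma$, since $\Theta$ is complex linear and fixes $\t$, hence fixes $\h=\t+i\t$ pointwise. In case (b) each of the three generators above is additionally $\Theta$-fixed, so by (a) they lie in $\u\cap\g^\Theta=\k$. In case (c), $\mathfrak{su}(1,1)$ is the real span of $\{iH,\,E+F,\,i(E-F)\}$; writing $\sigma=\tau\Theta$ with $\epsilon=-1$ one computes $\sigma(E_\gamma)=E_{-\gamma}$, $\sigma(E_{-\gamma})=E_\gamma$, and $\sigma(h_\gamma)=-h_\gamma$, so that $ih_\gamma$, $E_\gamma+E_{-\gamma}$, and $i(E_\gamma-E_{-\gamma})$ are all $\sigma$-fixed, hence lie in $\g_0$.

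The only genuine obstacle is the preliminary normalization $\tau(E_\gamma)=-E_{-\gamma}$. It comes down to the standard fact that $\tau$ acts by a negative real scalar on the pair $(\g_\gamma,\g_{-\gamma})$, since the Hermitian form $-B(\cdot,\tau(\cdot))$ on $\g$ is positive definite; once the sign is known, a positive real rescaling brings the scalar to $-1$. With that in place, everything else is formal manipulation of the commuting involutions $\tau$, $\Theta$, and $\sigma$.
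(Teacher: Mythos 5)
Your proposal is correct, and it is exactly the elementary verification the paper has in mind (the paper omits the proof, remarking only that the statement is elementary): normalize the root vectors so that $\tau(E_\gamma)=-E_{-\gamma}$ using positivity of $-B(\cdot,\tau(\cdot))$, then check $\tau$-, $\Theta$-, and $\sigma$-invariance of the images of the standard real generators of $\mathfrak{su}(2)$ and $\mathfrak{su}(1,1)$. The computations, including the signs $\Theta(E_{\pm\gamma})=\pm E_{\pm\gamma}$ according to root type and $\sigma=\tau\circ\Theta$, all check out.
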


We denote the corresponding group homomorphism by the same symbol.  Note that if $\gamma$ is of noncompact type, then $\iota_{\gamma}$ induces an embedding of the rank one diagram (\ref{rankonegroupdiagram}) into the group diagram (\ref{fdgroupdiagram}).  For each simple positive root $\gamma$, we use the group homomorphism to set
\begin{equation}\label{defn_of_r_gamma}
\mathbf r_{\gamma}=\iota_{\gamma}\begin{pmatrix} 0 & i \\ i & 0\end{pmatrix}\in N_{ U}( T)
\end{equation}
and obtain a specific representative for the associated simple reflection $r_{\gamma}\in W=N_{ U}( T)/ T$ corresponding to $\gamma$.  (We will adhere to the convention of using boldface letters to denote representatives of Weyl group elements).

\section{Birkhoff and Root Subgroup Factorization\label{factorization_section}}

By definition, the Birkhoff decomposition of $G$ relative to the triangular decomposition $\g=\n^-+\h+\n^+$ is
\begin{equation}
G=\bigsqcup_{W} \Sigma^{G}_{
w} \text{ where }\Sigma^{G}_{ w}=N^- w B^+.
\end{equation}
If we fix a representative $\mathbf w\in N_{U}(T)$ for $w\in W$,
then each $g\in\Sigma^{G}_{w}$ can be factored uniquely as
\begin{equation}\label{birkhoff2}
g=l\mathbf \w ma u,\text{ with } l\in N^-\cap w N^-w^{-1},\ ma\in
TA, \text{ and }u\in N^+.
\end{equation}
This defines functions $l\colon \Sigma_{w}^{G}\to N^-\cap wN^-w^{-1}$, $m\colon \Sigma_{w}^{G}\to T$, $a\colon \Sigma_{w}^{G}\to A$, and $u\colon \Sigma_{w}^{G}\to U$. For fixed $m_0\in T$, the subset $\{g\in \Sigma^{G}_{ w}: m(g)=m_0\}$ is a stratum (topologically an affine space). It is therefore sensible and appropriate to refer to $\Sigma^{G}_{w}$ as the ``isotypic component of the Birkhoff decomposition of $G$ corresponding to $w\in W$."
However we may occasionally lapse into referring to  $\Sigma^{G}_{
w}$ as the ``Birkhoff stratum corresponding to $w$."  We are interested in describing the induced decomposition of $G_0\subset G$.

We say that the elements of $\Sigma_{1}^{G}$ have a \emph{triangular factorization} since then (\ref{birkhoff2}) reduces to
\[
g=l(g)d(g)u(g) \text{ where }d(g)=ma\in TA=H
\]
and $l(g)\in N^-$.  The factor $d(g)$ can be explicitly computed in terms of root data by the formula
\[
d(g)=\prod_{j=1}^r \sigma_j(g)^{h_{\alpha_j}}
\]
where $\sigma_j(g)=\phi_{\Lambda_j}(\pi_{\Lambda_j}(g)v_{\Lambda_j})$ is the fundamental matrix coefficient for the highest weight vector corresponding to $\Lambda_j$.

\subsection{Factorization in the Compact Case}

Although we are interested in the case where $\m=\g_0$ is a non-compact simple Lie algebra of inner type, we record here the corresponding results when $\m=\u$ is a compact simple Lie algebra for comparison.  We will write $U$ for the group $M$ inside of $G$.  Given $w\in W$, define
\[
\Sigma^{U}_{w}:=\Sigma^{G}_{w}\cap U.
\]

\begin{theorem}\label{compact1} Fix a representative $\mathbf w\in N_{U}(T)$ for $w$.
For $g\in\Sigma^{G}_{w}$ the unique factorization (\ref{birkhoff2}) induces a bijective correspondence
\[
\Sigma^{U}_{w}\leftrightarrow \left( N^-\cap \w N^-\w^{-1}\right)\times T\text{ given by }g\mapsto (l,m).
\]
\end{theorem}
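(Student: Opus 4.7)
The plan is to reduce both the injectivity and the surjectivity of $g\mapsto(l,m)$ to uniqueness in the Iwasawa decomposition $G = U\cdot A\cdot N^+$, which is available since $G$ is complex semisimple with maximal compact subgroup $U$ and $A = \exp(\a)$, $\a = i\t$.

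For injectivity, I start from $g\in\Sigma_w^U$ with its Birkhoff factorization $g = l\,\mathbf{w}\,m\,a\,u$ as in (\ref{birkhoff2}) and rearrange it as
\[
l\,\mathbf{w}\,m \;=\; g\cdot u^{-1}a^{-1} \;=\; g\cdot a^{-1}\cdot\bigl(au^{-1}a^{-1}\bigr).
\]
Since $g\in U$, $a^{-1}\in A$, and $au^{-1}a^{-1}\in N^+$ (because $A$ normalizes $N^+$), the right hand side is an Iwasawa factorization of $l\,\mathbf{w}\,m$. Uniqueness of Iwasawa forces the three factors $g$, $a^{-1}$ and $au^{-1}a^{-1}$ to be determined solely by $l\,\mathbf{w}\,m$, hence by $(l,m)$ for fixed $\mathbf{w}$. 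In particular $g\mapsto(l,m)$ is injective on $\Sigma_w^U$, and the remaining factors $a$ and $u$ are recovered from $(l,m)$ as well.

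For surjectivity, given $(l,m)\in(N^-\cap\mathbf{w} N^-\mathbf{w}^{-1})\times T$, I apply Iwasawa to obtain $l\,\mathbf{w}\,m = k\,a_0\,n_0$ with $k\in U$, $a_0\in A$, $n_0\in N^+$, and set $g := k$. Then
\[
g \;=\; l\,\mathbf{w}\,m\,n_0^{-1}a_0^{-1},
\]
and conjugating $n_0^{-1}$ through $m$ and $a_0^{-1}$ (using that $T$ and $A$ normalize $N^+$) rewrites $g$ in the form $g = l\,\mathbf{w}\,(ma_0^{-1})\,u$ with $u\in N^+$, $ma_0^{-1}\in TA = H$, and $l\in N^-\cap\mathbf{w} N^-\mathbf{w}^{-1}$. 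This is a legitimate instance of (\ref{birkhoff2}), so uniqueness of the Birkhoff factorization identifies its components as $l(g) = l$ and $m(g) = m$, which shows that $(l,m)$ is attained.

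The only step requiring real care is the last commutation: one must verify that absorbing $n_0^{-1}$ and $a_0^{-1}$ into the Birkhoff $H$- and $N^+$-factors leaves the $N^-$-factor $l$ and the $T$-factor $m$ undisturbed, so that the Iwasawa output really does represent the prescribed pair. I do not foresee any obstacle beyond this bookkeeping; the core of the argument is precisely the identification of $l\,\mathbf{w}\,m = g\cdot a^{-1}\cdot(au^{-1}a^{-1})$ as an Iwasawa decomposition, which packages the effect of the unitarity condition on the Birkhoff factors.
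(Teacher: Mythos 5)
Your argument is correct and complete. The paper itself states Theorem \ref{compact1} without proof (deferring to Lu), and its proof of the noncompact analogue, Theorem \ref{first theorem}(e), runs differently: injectivity is asserted from uniqueness of the factorization, and surjectivity is obtained from the transitivity of the compact group on a flag manifold (there, $K$ acting on $(B^-\cap K^{\mathbb C})\backslash K^{\mathbb C}$; the compact-case analogue would be $U$ acting on $B^-\backslash G$). Your route packages both directions into the single statement that $G=U\cdot A\cdot N^+$ uniquely: injectivity because $l\mathbf{w}m = g\cdot a^{-1}\cdot(au^{-1}a^{-1})$ exhibits $g$ as the $U$-component of $l\mathbf{w}m$, and surjectivity because the coset $l\mathbf{w}m\,AN^+$ meets $U$ in exactly one point. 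These are of course equivalent facts ($G=B^-U$ with $B^-\cap U=T$ is the same as the Iwasawa decomposition together with $U\cap AN^+=\{1\}$), but your version has the advantage of making the injectivity step honest --- the paper's ``1--1 by uniqueness of the factorization'' silently uses $U\cap AN^+=\{1\}$, which is exactly the Iwasawa uniqueness you invoke. The ``bookkeeping'' you flag at the end is genuinely trivial: $m\,n_0^{-1}a_0^{-1}=(ma_0^{-1})(a_0n_0^{-1}a_0^{-1})$ with $ma_0^{-1}\in H$ and $a_0n_0^{-1}a_0^{-1}\in N^+$ since $A$ normalizes $N^+$, so the display is literally an instance of (\ref{birkhoff2}) and its uniqueness pins the components to $(l,m)$. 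No gap.
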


\begin{remark} For fixed $m_0\in T$, the set $\{g\in\Sigma^{U}_{w}:m(g)=m_0\}$ is a stratum, and we will refer to $\Sigma^{U}_{w}$ as the ``isotypic component of the Birkhoff decomposition for $U$ corresponding to $w\in W$.'' The quotient of $\Sigma^{U}_{w}$ by $T$ is the usual Birkhoff stratum for the flag space $ U/ T=G/B^+$ corresponding to $w$.
\end{remark}

We now briefly recall Lu's approach to root subgroup factorization from \cite{Lu}. This involves the Bruhat decomposition $G=\bigsqcup_{W} B^+w B^+$.  A translation of Lu's results over to the Birkhoff decomposition will be given below.

For $\zeta\in \C$, we define a function $k\colon \C\to \SU(2)$ by
\begin{equation}\label{uofzeta}
k(\zeta)=\mathbf a_+(\zeta)\left(\begin{matrix}1&-\bar{\zeta}\\\zeta&1\end{matrix}\right)\in SU(2),\quad \text{  where }
\mathbf a_+(\zeta)=(1+|\zeta|^2)^{-1/2}.
\end{equation}
\begin{theorem} Fix $w'\in W$. Choose a minimal factorization $w'=r_{l(w')}\dots r_1$, where each $r_j$ is a reflection and write $\gamma_j$ for the corresponding simple positive root. Then the map
\[
\mathbb C^{l(w')}\times T \to U\cap B^+w'B^{+}\text{ given by }((\zeta_j),t)\mapsto \mathbf r_ni_{\gamma_n}(k(\zeta_n))..\mathbf r_1i_{\gamma_1}(k(\zeta_1))t
\]
is a bijection.
\end{theorem}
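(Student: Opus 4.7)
The plan is to proceed by induction on $n = l(w')$. The base case $n=0$ (so $w' = e$) reduces to the identity map $T \to U \cap B^+ = T$; the equality $U \cap B^+ = T$ holds because any unitary element of $B^+ = HN^+$ is forced to be diagonal, hence lies in $T$.

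For the inductive step, write $w = r_{n-1}\cdots r_1$ so that $w' = r_n w$ with $l(w') = l(w)+1$, and abbreviate $r = r_n$, $\gamma = \gamma_n$, $\mathbf{r} = \mathbf{r}_n$. Combined with the inductive hypothesis applied to $w$, it suffices to show that the map
\[
\Phi\colon \C \times \bigl(U\cap B^+ w B^+\bigr) \to U\cap B^+ w' B^+,\qquad (\zeta, h) \mapsto \mathbf{r}\,\iota_\gamma(k(\zeta))\,h,
\]
is a bijection. Well-definedness is immediate: the $(2,1)$-entry of $\mathbf{r}\,k(\zeta)$ equals $i/\sqrt{1+|\zeta|^2}\ne 0$, so $\mathbf{r}\,\iota_\gamma(k(\zeta))\in U\cap B^+ r B^+$, and the standard Bruhat product rule $(B^+ r B^+)(B^+ w B^+)\subseteq B^+ r w B^+$ (valid because $l(rw)>l(w)$) places the product in $U\cap B^+ w' B^+$.

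For bijectivity, the crucial input is the rank-one case. The assertion there is that $(\zeta,t)\mapsto \mathbf{r}\,k(\zeta)\,t$ gives a bijection $\C\times T\to \SU(2)\cap B_2^+ r B_2^+$, where $B_2^+$ is the upper-triangular Borel in $\SL(2,\C)$. This is verified by a direct inversion: given a matrix $\begin{pmatrix}a&-\bar c\\ c&\bar a\end{pmatrix}\in \SU(2)$ with $c\ne 0$, one reads off $\zeta = a/c$, and then $t$ is determined by the identity $\sqrt{1+|\zeta|^2}=1/|c|$. Passing through $\iota_\gamma$, and using that $U\cap P_\gamma = \iota_\gamma(\SU(2))\cdot T$ for the minimal parabolic $P_\gamma = \iota_\gamma(\SL(2,\C))\cdot B^+$, gives the desired parameterization of $U\cap B^+ r B^+$.

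To promote this rank-one parameterization to the inductive step, one uses that the multiplication map $\bigl(U\cap B^+ r B^+\bigr)\times^T \bigl(U\cap B^+ w B^+\bigr)\to U\cap B^+ w' B^+$ is a bijection; this is the compact-group incarnation of the statement that the Bott--Samelson map $P_\gamma\times^{B^+} \overline{X_w}\to \overline{X_{w'}}$ restricts to an isomorphism on the open Schubert cell $X_{w'}$ in $G/B^+$. The main obstacle is surjectivity of $\Phi$: given $g\in U\cap B^+ w' B^+$, one must extract the unique $\zeta$. This can be done by projecting $gB^+$ to $G/P_\gamma$ to isolate the $\SL(2,\C)$-factor, reading off $\zeta$ from the resulting point in $P_\gamma/B^+\cong \mathbb{CP}^1$, and then absorbing a residual element of $T$ into $h$ using that $T$ normalizes $U\cap B^+ w B^+$ under right multiplication.
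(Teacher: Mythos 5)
Your argument is correct, but it is not the route the paper takes. The paper gives no self-contained proof of this statement: it cites Lu and says the proof is ``essentially reproduced'' in the noncompact section, where the method is an explicit induction computing the full triangular factorization of the product $g^{(n)}=\iota_{\tau_n}(k(\zeta_n))\cdots\iota_{\tau_1}(k(\zeta_1))$ (yielding closed formulas for the $l$- and $a$-factors), followed by the observation that the map is injective and open, with surjectivity obtained topologically (connectedness of the component plus a properness/closed-image argument driven by the product formula for $a$). You instead run the standard Bruhat--Bott--Samelson induction on length: peel off $\mathbf r_n\iota_{\gamma_n}(k(\zeta_n))$, use $(B^+rB^+)(B^+wB^+)=B^+rwB^+$ when lengths add, reduce to the rank-one computation in $\SU(2)$, and glue via the bijection $(U\cap B^+rB^+)\times^T(U\cap B^+wB^+)\to U\cap B^+w'B^+$ (which follows from transitivity of $U$ on $G/B^+$ with stabilizer $T$, so that Birkhoff/Bruhat representatives can be chosen in $U$ uniquely up to $T$). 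Your approach settles surjectivity purely algebraically and is arguably closer to Lu's original argument; what it does not produce is the explicit $a(g)=\prod_j\mathbf a_+(\zeta_j)^{h_{\tau_j}}$ formula, which is the part of the inductive computation the paper actually needs later (for the Haar measure calculation and, crucially, for the surjectivity argument in the noncompact case, where $G_0$ does not act transitively on $G/B^+$ and your $\times^T$ gluing is unavailable). One small imprecision: in your rank-one inversion, the identity $\sqrt{1+|\zeta|^2}=1/|c|$ is automatic and does not determine $t$; rather $t$ is recovered from the phase of the $(2,1)$-entry, e.g.\ $t=\mathrm{diag}(c/(i|c|),\overline{c}/(\overline{i|c|}))$. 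This does not affect the validity of the proof.
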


\begin{remarks}\label{weylalgorithm}$\phantom{a}$
\begin{enumerate}
\item[(a)] The algorithm for choosing a factorization for $w'$ is the following: choose (a simple positive root) $\gamma_1$ such that $w'\cdot \gamma_1<0$, determining $r_1$; choose $\gamma_2$ such that  $w'r_1\cdot\gamma_2<0$,  determining $r_2$; choose $\gamma_3$ such that $w'r_1r_2\cdot \gamma_3<0$, determining $r_3$; and so on. The positive roots flipped to negative roots are $\tau_j=r_1..r_{j-1}\cdot \gamma_j$, for $j=1,..,l(w')$.
\item[(b)] A choice of factorization $w'=r_{l(w')}..r_1$ determines a
non-repeating sequence of adjacent Weyl chambers
\begin{equation}
C, \ (w_1')^{-1}C,...\ ,\ (w'_j)^{-1}C,...\ ,\ (w')^{-1}C
\end{equation}
where $w_j':=r_j..r_1$ and the
step from $(w_{j-1}')^{-1}C$ to $(w_j')^{-1}C$ is implemented by the
reflection $(w_{j-1}')^{-1}r_jw_{j-1}'$ associated to $\tau_j$.  In particular the
wall between $(w_{j-1}')^{-1}C$ and $(w_j')^{-1}C$ is fixed by $(w_{j-1}')^{-1}r_jw_{j-1}'$.
Conversely, given a sequence of length $l(w')$ of adjacent chambers $C_1,\dots, C_{l(w')}$ from $C_1=C$ to $C_{l(w')}=(w')^{-1}C$ then there is a corresponding minimal factorization.
\item[(c)] A basic example of a factorization of the longest element of the Weyl group for $\sl(n,\C)$ is the lexicographic factorization
\[
r_{{\alpha}_1} (r_{{\alpha}_2} r_{{\alpha}_1}) (r_{{\alpha}_3} r_{{\alpha}_2} r_{{\alpha}_1}).. (r_{{\alpha}_{n-1}}..r_{{\alpha}_1}).
\]
Here $\lambda_i$ denotes the functional which selects the $i^{th}$ diagonal entry, the simple positive roots are ${\alpha}_i=\lambda_{i}-\lambda_{i+1}$, and the sequence of roots $\tau$ is given by
\begin{eqnarray*}
 \tau_1=\lambda_1-\lambda_2,\tau_2=\lambda_1-\lambda_3,\dots,\tau_{n-1}=\lambda_1-\lambda_n, & &\\
 \tau_{n}=\lambda_2-\lambda_3,\dots,\tau_{2n-3}=\lambda_2-\lambda_n,& & \\
 \vdots & & \\
 \tau_{n(n-1)/2}=\lambda_{n-1}-\lambda_n. & &
\end{eqnarray*}
\end{enumerate}
\end{remarks}

The Bruhat and Birkhoff decompositions (in this finite dimensional context) are related by translation by the unique longest Weyl group element $w_0$, i.e.,
\[
N^-wB^+ = w_0 N^+ w'B^+=w_0B^+ w'B^+
\]
where $w=w_0w'$. Since we can choose representatives for $w$, $w_0$, and $w'$ in $U$, the same relationship holds on the induced decompositions of $U$.  In terms of this translation, the following lemma describes how to select the sequence of simple positive roots intrinsically in terms of $w$ (without reference to $w_0$ and $w'$).

\begin{lemma}\label{weylfactorization} Fix $w\in W$.
\begin{enumerate}
\item[(a)] Choose a sequence of simple positive roots $\gamma_j$ in the
following way: (1) choose $\gamma_1$ such that $w\cdot
\gamma_1>0$; (2) choose $\gamma_2$ such that $wr_1\cdot
\gamma_2>0$; (3) choose $\gamma_3$ such that $w r_1r_2\cdot
\gamma_3>0$, and so on, where $r_j$ is the simple reflection
corresponding to $\gamma_j$. Let
$\tau_j=r_1..r_{j-1}\cdot\gamma_j$. Then the $\tau_j$ are the
positive roots which are mapped to positive roots by
$w$.
\item[(b)] This choice of positive roots determines a
non-repeating sequence of adjacent Weyl chambers
\begin{equation}
w^{-1}C,\ r_1w^{-1}C,\ ...,\ r_{j-1}..r_1w^{-1}C,\ ...,\ -C.
\end{equation}
If $w_j'=r_j..r_1$ then the step from $w_{j-1}'w^{-1}C$ to $w_j'w^{-1}C$ is implemented by the reflection $r_j$.  Conversely, given a sequence $(C_j)$ of length $n=l(w_0)-l(w)$ consisting of adjacent chambers from
$C_1=w^{-1}C$ to $C_n=-C$ which is minimal, there is a corresponding minimal factorization of $w'=w_0^{-1}w$.
\end{enumerate}
\end{lemma}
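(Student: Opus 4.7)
The plan is to transport each statement of Remarks \ref{weylalgorithm} from the Bruhat cell for $w'$ to the Birkhoff cell for $w$, using the $w_0$--translation $w = w_0 w'$ highlighted in the paragraph preceding the lemma. The key observation is the following sign reversal: because $w_0$ is an involution whose action negates every root, for any $u\in W$ and any root $\alpha$,
\[
wu\cdot\alpha>0 \;\Longleftrightarrow\; w' u\cdot\alpha <0, \qquad \text{where } w' := w_0 w.
\]
Under this equivalence, the iterative sign conditions in Lemma \ref{weylfactorization}(a) for $w$ translate term-by-term into the iterative sign conditions in Remarks \ref{weylalgorithm}(a) for $w'$.

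For part (a), I would first set $w' := w_0 w$ and note $l(w')=l(w_0)-l(w)=:n$. Applying the equivalence with $u=r_1\cdots r_{j-1}$ shows that the sequence $(\gamma_j,r_j)$ produced by Lemma \ref{weylfactorization}(a) coincides with the sequence produced by Remarks \ref{weylalgorithm}(a) applied to $w'$; in particular, existence of a valid simple $\gamma_j$ at each stage (which persists as long as the current element is not $1$, equivalently not $w_0$) and termination after exactly $n$ steps are inherited. The remark identifies $\tau_j=r_1\cdots r_{j-1}\cdot\gamma_j$ as the enumeration of the positive roots sent to negative roots by $w'$; applying the equivalence once more with $u=1$, a positive root $\tau$ satisfies $w'\cdot\tau<0$ if and only if $w\cdot\tau>0$, which is the desired identification.

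For part (b), part (a) produces a reduced word $r_1\cdots r_n$ for $w^{-1}w_0$ of length $n$. Iterated left multiplication yields the gallery
\[
w^{-1}C,\; r_1 w^{-1}C,\; r_2 r_1 w^{-1}C,\; \ldots,\; r_n\cdots r_1 w^{-1} C,
\]
whose terminal chamber equals $(w^{-1}w_0)^{-1}\cdot w^{-1}C = w_0 w\cdot w^{-1}C = w_0 C = -C$. Successive chambers differ by the action of a simple reflection $r_j$, hence are adjacent; distinctness follows from the word being reduced. For the converse, the chamber distance from $w^{-1}C$ to $-C$ in the Coxeter complex equals $l(w\cdot w_0^{-1})=l(ww_0)=l(w_0)-l(w)=n$, so any gallery of length $n$ between the two chambers is automatically minimal, and reading off the sequence of wall-crossings produces a reduced expression for $w^{-1}w_0$, which by part (a) is tantamount to a minimal factorization of $w'=w_0^{-1}w$.

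The essential obstacle here is not geometric but bookkeeping: one must carefully track whether the current Weyl group element is being acted on by $w_0$ on the left, whether simple reflections multiply on the left or the right, and whether sign conditions refer to positive or negative roots. Once the $w_0$--symmetry is installed with the correct conventions, every clause of Remarks \ref{weylalgorithm} has a direct counterpart on the Birkhoff side, and the lemma follows without new ideas.
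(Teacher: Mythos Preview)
Your proposal is correct and follows exactly the approach the paper takes: the paper's proof is a one-line reduction to Remarks \ref{weylalgorithm} via the substitution $w'=w_0^{-1}w$, and you have simply fleshed out that reduction in detail. The sign-reversal observation $wu\cdot\alpha>0\iff w'u\cdot\alpha<0$ is precisely the mechanism that makes the translation work, and your gallery computation for part (b) is the direct verification the paper leaves implicit.
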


\begin{proof} As we noted above, this is equivalent to the more
standard procedure of setting $w'=w_0^{-1}w$ and choosing a
reduced factorization $w'=r_n..r_1$ where $n=l(w_0)-l(w)=l(w')$.
\end{proof}

\begin{theorem}\label{Lutheorem} Fix $w\in W(K)$ and a representative $\mathbf w\in N_{K}(T)$ for $w$, then determine positive simple roots $\gamma_1,\dots,\gamma_n$ with associated simple reflections $r_1,\dots,r_n$, and positive roots $\tau_1,\dots,\tau_n$ as in Lemma \ref{weylfactorization}.  Set $\w_j'=\mathbf r_j..\mathbf r_1$ and $\iota_{\tau_j}(g)=\w_{j-1}'\iota_{\gamma_j}(g)(\w_{j-1}')^{-1}$ for each $g\in \SL(2,\C)$.
\begin{enumerate}
\item[(a)] Each $g\in \Sigma^{U}_w$ has a unique factorization of the form
\[
g=\mathbf w \iota_{\tau_n}(k(\zeta_n))..\iota_{\tau_1}(k(\zeta_1)) t
\]
for some $t\in T$ and some $(\zeta_1,\dots,\zeta_n)\in \C^n$.
\item[(b)] The map
\[
\C^n\to N^{-}\cap wN^{-}w^{-1}:\zeta \to l(\mathbf w \iota_{\tau_n}(k(\zeta_n))..\iota_{\tau_1}(k(\zeta_1)))
\]
is a diffeomorphism.
\item[(c)] If $g\in \Sigma_{w}^{\dot{U}}$ has the factorization in part (a) then the factor $a(g)$ from (\ref{birkhoff2}) has the product form
    \[
    a(g)=\prod_{j=1}^n
\mathbf a_+(\zeta_j)^{h_{\tau_j}}
\]
where $\mathbf a_+$ is the function from (\ref{uofzeta}).
\end{enumerate}
\end{theorem}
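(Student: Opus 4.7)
The plan is to derive Theorem \ref{Lutheorem} from the Bruhat-case theorem stated just above, via translation by the longest Weyl element $\mathbf{w}_0$. Setting $\mathbf{w}' := \mathbf{w}_0^{-1}\mathbf{w}$ (representing $w' = w_0^{-1}w$, of length $n$), the identity $N^- = \mathbf{w}_0 N^+ \mathbf{w}_0^{-1}$ gives
\[
\Sigma_w^G = N^- \mathbf{w} B^+ = \mathbf{w}_0 \cdot B^+ \mathbf{w}' B^+, \qquad \Sigma_w^U = \mathbf{w}_0 \cdot (U \cap B^+ \mathbf{w}' B^+).
\]
Because $w'\gamma < 0 \iff w\gamma > 0$, the sequence $\gamma_1,\ldots,\gamma_n$ supplied by Lemma \ref{weylfactorization} is also a valid Bruhat-algorithm sequence for $w'$, with $\mathbf{w}' = \mathbf{w}'_n = \mathbf{r}_n \cdots \mathbf{r}_1$. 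The Bruhat theorem then yields a bijection
\[
\C^n \times T \to \Sigma_w^U, \qquad (\zeta,t') \mapsto \mathbf{w}_0\, \mathbf{r}_n \iota_{\gamma_n}(k(\zeta_n)) \cdots \mathbf{r}_1 \iota_{\gamma_1}(k(\zeta_1))\, t'.
\]

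For (a), I would iteratively move each $\mathbf{r}_j$ leftward past the intervening $\iota_{\gamma_k}$'s to assemble $\mathbf{w}'_n$ at the far left; each $\iota_{\gamma_k}(k_k)$ becomes conjugated by the accumulated $\mathbf{w}'_{k-1}$, and hence becomes $\iota_{\tau_k}(k_k)$ by the definition $\iota_{\tau_k}(g) = \mathbf{w}'_{k-1} \iota_{\gamma_k}(g) (\mathbf{w}'_{k-1})^{-1}$. Since $\mathbf{r}_j^{-1}$ and $\mathbf{r}_j$ differ by $\mathbf{r}_j^2 \in T$, and $T$ normalizes each root subgroup $\iota_{\tau_k}(\SL(2,\C))$, the rearrangement introduces a smooth $T$-valued correction $\delta(\zeta)$ on the right. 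Setting $t = \delta(\zeta)\, t'$ converts the Bruhat parameterization into the form in (a); the reparameterization $(\zeta,t') \mapsto (\zeta, \delta(\zeta)t')$ being a diffeomorphism of $\C^n \times T$, part (a) follows.

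For (b) and (c), I would compute the Birkhoff factorization of $g = \mathbf{w} X t$, where $X := \iota_{\tau_n}(k_n) \cdots \iota_{\tau_1}(k_1)$. From the standard triangular factorization
\[
k(\zeta) = \begin{pmatrix} 1 & 0 \\ \zeta & 1 \end{pmatrix} \mathbf{a}_+(\zeta)^{\mathrm{diag}(1,-1)} \begin{pmatrix} 1 & -\bar\zeta \\ 0 & 1 \end{pmatrix}
\]
in $\SU(2)$, we get $\iota_{\tau_j}(k(\zeta_j)) = \exp(\zeta_j f_{-\tau_j})\, \mathbf{a}_+(\zeta_j)^{h_{\tau_j}}\, \exp(-\bar\zeta_j f_{\tau_j})$ with $f_{\pm\tau_j} := \mathbf{w}'_{j-1} e_{\pm\gamma_j} (\mathbf{w}'_{j-1})^{-1} \in \g_{\pm\tau_j}$. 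Using Chevalley's commutation relations and iterating on $j$, one rearranges $X$ into a triangular factorization $X = L_X D_X U_X$; crucially, the specific ordering of the $\tau_j$'s arising from Lemma \ref{weylfactorization} (b) (adjacent Weyl chambers) ensures that commutators among the unipotent factors remain in $N^\pm$ and never contribute to $H$, so $D_X = \prod_j \mathbf{a}_+(\zeta_j)^{h_{\tau_j}}$. Writing $g = (\mathbf{w} L_X \mathbf{w}^{-1})\, \mathbf{w}\, D_X U_X t$, and noting $w\tau_j > 0$ implies $\mathbf{w} L_X \mathbf{w}^{-1} \in N^- \cap w N^- w^{-1}$, this is the Birkhoff factorization. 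Reading off yields $a(g) = D_X = \prod_j \mathbf{a}_+(\zeta_j)^{h_{\tau_j}}$, proving (c); part (b) follows because $\zeta \mapsto l(g) = \mathbf{w} L_X \mathbf{w}^{-1}$ is smooth (from the triangular factorization) and depends bijectively on $\zeta$ alone by (a), since the $T$-factor of the Birkhoff decomposition is $m(g) = t$.

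The main obstacle is the $T$-bookkeeping in step (a): precisely tracking how $\mathbf{r}_j^2 \in T$ propagates past each subsequent $\iota_{\tau_k}(\cdot)$, rescaling its parameter via the character $\tau_k$, to produce the smooth correction $\delta(\zeta)$. A secondary technical point in (c) is showing that the $\tau_j$-ordering prevents any $H$-contribution when commuting unipotent factors in $X$; this rests on the combinatorial structure of the $\tau_j$'s as wall-crossings of the adjacent Weyl chamber sequence described in Lemma \ref{weylfactorization} (b).
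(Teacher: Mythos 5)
This is correct and essentially the paper's own argument: the paper proves this theorem precisely by translating Lu's Bruhat-cell parameterization by $\mathbf w_0$ (your part (a)) and by the inductive triangular-factorization computation that it writes out, in the more general noncompact setting, in the proof of Theorem \ref{root_factorization_g0} (your parts (b) and (c)). The only step to tighten is the last line of (b): a smooth bijection need not be a diffeomorphism, but the smooth inverse comes for free from your induction, since writing $L_X=\exp(\sum_j x_j f_{\tau_j})$ yields the triangular change of variables $x_n=\zeta_n$ and $x_j=\zeta_j\cdot(\text{a positive smooth function of }\zeta_{j+1},\dots,\zeta_n)$, which is invertible with smooth inverse.
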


\begin{proof} This is a translation of Lu's results. We will essentially reproduce the proof in the
next subsection.
\end{proof}

\subsection{Factorization in the Noncompact Inner Case}

Now we return to the case where $\m=\g_0$ is a noncompact Lie algebra over $\R$ of inner type.  Then each root space for $\h$ on $\g$ is contained either in $\k^\C$ or in $\p^\C$.  This yields a vector space decomposition
\[
\n^+=\n^+_{\k}+\n^+_{\p}
\]
where $\n^+_{\k}$ is spanned by root vectors corresponding
to compact type positive roots, and $\n^+_{\p}$ is spanned by root
vectors corresponding to noncompact type positive roots; moreover,
$\n^+_{\p}$ is an abelian ideal of $\n^+$.  Likewise, $\n^-=\n_\p^-+\n_{\k}^-$.
Note that
\begin{equation}\label{cartandecomp}
\k^{\mathbb C}=\n_\k^-+\h+\n_\k^+
\text{ and }\mathfrak p^{\mathbb C}=\n_{\p}^-+\n_{\p}^+
\end{equation}
as vector spaces.  The sum $\k^\C+\n_\p^+$ is the parabolic subalgebra of $\g$ corresponding to the set of simple positive roots of compact type.  We denote the corresponding parabolic subgroup of $G$ by $P=K^\C N_\p^+$.

We refer to the decomposition
\[
\mathfrak g=\mathfrak n^{-}_\p+ {\mathfrak k}^{\mathbb C}+{\mathfrak
n}^{+}_\p
\]
as a \emph{block triangular decomposition} of $\g$.  For $G$, the corresponding group-level ``block Birkhoff decomposition'' is well-known.  We will use $W(K)=N_{K}(T)/T$ as a model for the Weyl group of $(\k,\t)$.  Then there is a faithful embedding
\[
W(K):=N_{K}(T)/T\to W:=N_{U}(T)/T.
\]
The components of the ``block Birkhoff decomposition'' are then indexed by the elements of the quotient $W/W(K)$.

\begin{lemma}\label{first lemma}
\[
G=\bigsqcup_{w\in W/W(K)}N^-_pwP
\]
and the natural map
\[
N^-_p\cap wN^-_pw^{-1}\to N^-_p wP\text{ given by }l\mapsto lwP
\]
is a diffeomorphism.
\end{lemma}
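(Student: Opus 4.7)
This lemma is the ``block Birkhoff'' decomposition of $G$ relative to the parabolic $P=K^\C N^+_p$ and its opposite unipotent radical $N^-_p$. The strategy is to derive it from the fine Birkhoff decomposition $G=\bigsqcup_{w\in W}N^- wHN^+$ by grouping cells according to cosets in $W/W(K)$ and absorbing the compact-type factors into $P$. Two structural facts drive the argument: $\n^-_p$ is an abelian ideal in $\n^-$, so the product map $N^-_p\times N^-_k\to N^-$ is a diffeomorphism and $N^-_p$ is abelian; and $P\cap N^-_p=\{1\}$, since the root spaces of $N^-_p$ are negative noncompact and none of them appears in $P$.

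For the covering, observe that $HN^+\subset P$ and that $W(K)$ has representatives in $K\subset P$, so the fine Birkhoff cells indexed by elements of a fixed coset $wW(K)$ all lie in $N^- wP$; hence $G=\bigcup_{w\in W/W(K)} N^- wP$. To replace $N^-$ by $N^-_p$, write $N^-=N^-_p N^-_k$ and choose a coset representative $w$ for which $w^{-1}N^-_k w\subset P$; a short root-space calculation shows this holds for a suitable (e.g.\ minimal-length) choice, giving $N^- wP=N^-_p wP$. Disjointness of the cells for distinct cosets is inherited from the disjointness of the corresponding parabolic Bruhat cells in $G/P$.

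For the parameterization, consider the map $\Phi_w:N^-_p\cap wN^-_p w^{-1}\to N^-_p wP$, $l\mapsto lwP$. Injectivity uses $P\cap N^-_p=\{1\}$: if $l_1 wP=l_2 wP$ with both $l_i$ in the intersection, then $l_2^{-1}l_1\in wN^-_p w^{-1}\cap wPw^{-1}=w(P\cap N^-_p)w^{-1}=\{1\}$. For surjectivity, exploit that $N^-_p$ is abelian: partition $\Phi^+_p=A_+\sqcup A_-$ according to whether $w^{-1}\beta\in\Phi^+_p$ or not, and factor any $l\in N^-_p$ uniquely as $l=l_+l_-$ with $l_\pm$ in the product of root subgroups $U_{-\beta}$ for $\beta\in A_\pm$. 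Then $l_+\in N^-_p\cap wN^-_p w^{-1}$, while for $\beta\in A_-$ a direct root-space check gives $w^{-1}U_{-\beta}w=U_{-w^{-1}\beta}\subset P$ (since $-w^{-1}\beta$ lies outside $-\Phi^+_p$, so its root space is among those of $P$). Hence $l_-\in wPw^{-1}$ and $lwP=l_+wP$ with $l_+$ in the desired intersection. Smoothness of $\Phi_w$ and its inverse follows from the polynomial nature of the root-subgroup decomposition of the abelian unipotent group $N^-_p$.

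The main obstacle is the reduction step $N^- wP=N^-_p wP$: this requires a careful choice of coset representative so that the compact-type factors in $N^-_k$ can be absorbed into $P$, and it is the point at which the Hermitian-symmetric (inner-type) geometry of $G/P$ enters most crucially into the argument.
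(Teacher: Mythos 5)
The paper's own ``proof'' of this lemma is a one-line citation to standard facts about Birkhoff stratifications of generalized flag spaces, so you are doing strictly more work than the authors; unfortunately that work founders at precisely the step you flag as the main obstacle. The reduction $N^-wP=N^-_pwP$ requires a coset representative $w$ with $w^{-1}N^-_kw\subset P$, i.e.\ $w^{-1}$ must carry no positive compact root to a positive noncompact root. Your claim that ``a short root-space calculation shows this holds for a suitable (e.g.\ minimal-length) choice'' is false. Take $\g_0=\mathfrak{su}(2,1)$, $\g=\mathfrak{sl}(3,\C)$, with simple roots $\alpha_1$ (compact) and $\alpha_2$ (noncompact), and consider the coset of $r_2$ in $W/W(K)$. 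Its two representatives are $r_2$ and $r_2r_1$, and $r_2^{-1}(\alpha_1)=\alpha_1+\alpha_2$ while $(r_2r_1)^{-1}(\alpha_1)=\alpha_2$ --- both positive noncompact --- so no representative works. Geometrically, $G/P$ is the Grassmannian of $2$-planes in $\C^3$, $N^-_p=\{I+aE_{31}+bE_{32}\}$ fixes $e_3$ and acts trivially modulo $\langle e_3\rangle$, hence fixes every plane containing $e_3$; so the $N^-_p$-orbit of $r_2P$ is a single point while the Birkhoff cell $N^-r_2P/P$ is one-dimensional, and $\bigcup_w N^-_pwP$ misses every plane containing $e_3$ other than $\langle e_1,e_3\rangle$ and $\langle e_2,e_3\rangle$. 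In other words, the covering assertion of the lemma as printed is itself false for $w\notin W(K)$, and no choice of representative can repair your reduction.

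What your analysis actually uncovers is that the lemma should read $G=\bigsqcup_{w\in W/W(K)}N^-wP$ with each cell parameterized by $N^-\cap wN^-_pw^{-1}$ (a set depending only on the coset, since $W(K)$ normalizes $N^-_p$); this is the standard statement the cited references prove. The two versions agree exactly for the coset of $1$, where $N^-P=N^-_pN^-_kP=N^-_pP$ because $N^-_k\subset K^{\mathbb C}\subset P$ and $N^-_p$ is normal in $N^-$ --- and the top cell is the only case invoked later in the proof of Theorem \ref{first theorem}, so the error does not propagate. The rest of your argument is sound as far as it goes: injectivity via $N^-_p\cap wPw^{-1}\cap wN^-_pw^{-1}=w(N^-_p\cap P)w^{-1}=\{1\}$ is correct, and your surjectivity argument does correctly show that $l\mapsto lwP$ carries $N^-_p\cap wN^-_pw^{-1}$ bijectively onto the $N^-_p$-orbit of $wP$ (consistent with the example above, where both sides are a single point for $w=r_2$). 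So the parameterization half of the lemma is fine; it is the first display that needs $N^-$ in place of $N^-_p$, and your proof should be built on the $N^-$-orbit decomposition rather than on a reduction to $N^-_p$-orbits that does not hold.
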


\begin{proof} This follows from standard facts about Birkhoff
stratification for a generalized flag space; see, for example, the Appendix to \cite{Pi1}.
\end{proof}

\begin{theorem}\label{first theorem} $\phantom{a}$
\begin{enumerate}
\item[(a)] Each $g\in G_0$ has a unique ``block triangular factorization"
\[
g=l_p g_k u_p\text{ where }l_p\in N^-_\p,\ g_k\in K^{\mathbb C},\ \text{ and }u_p\in N^+_\p.
\]
\item[(b)] The set $D(G_0,K)=\{l_p:g=l_pg_ku_p\in G_0\}$ is a contractible bounded complex domain in $N^-_p$.
\item[(c)] Each $g\in G_0$ has a factorization of the form
\begin{equation}\label{birkhoff_refined}
g=l_p l_k ma \mathbf w u_k u_p
\end{equation}
where $l_p$ and $u_p$ are the same factors occurring in part (a),
$\mathbf w$ is a representative for some $w\in W(K)$, $ma\in TA$, $l_k \in N_k^-\cap
w N_k^-w^{-1}$, and $u_k\in N_k^+$.
\item[(d)] Furthermore,
\[
G_0=\bigsqcup_{W(K)} \Sigma^{G_0}_{w}, \text{ where }\Sigma^{G_0}_{ w}:=\Sigma^{G}_{w}\cap G_0.
\]
\item[(e)] For each $w\in W(K)$, if a representative $\mathbf w$ for $w$ is fixed then the factorization in part (c) is unique and defines functions $l_p\colon \Sigma_{w}^{G_0}\to D(G_0,K)$, $l_k\colon \Sigma_{w}^{G_0}\to N_k^-\cap wN_k^- w^{-1}$, $m\colon \Sigma_{w}^{G_0}\to T$, and  induces a diffeomorphism
\[
\Sigma^{G_0}_{w}\to D(G_0,K)\times
\{l_k\in N_k^-\cap wN_k^-w^{-1}\}\times T
\]
given by $g\mapsto (l_p(g),l_k(g),m(g))$.
\end{enumerate}
\end{theorem}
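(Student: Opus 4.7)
The plan is to derive the theorem from two ingredients: the Harish-Chandra embedding of the Hermitian symmetric pair (which yields parts (a) and (b)), and the classical Birkhoff decomposition of the complex reductive group $K^\C$ applied to the middle factor (which delivers (c)--(e)).

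\emph{For (a) and (b).} By Lemma \ref{first lemma}, $G=\bigsqcup_{W/W(K)} N_\p^-\mathbf{w} P$, and the stratum indexed by the identity coset is the open subset $N_\p^- P$. The claim of (a) is that $G_0\subset N_\p^- P$. I would establish this by considering the action of $G_0$ on the flag variety $G/P$: using that $\sigma$ swaps $\n_\p^\pm$ one checks $G_0\cap P=K$, so the orbit $G_0\cdot eP\cong G_0/K$ has real dimension $\dim_\R\p=2\dim_\C(G/P)$ and is therefore open in $G/P$. The orbit contains $eP$ and hence meets the open chart $N_\p^-\cdot eP$; the classical Harish-Chandra embedding theorem (proved via estimates on $\exp\colon\p\to G_0$) shows that the whole orbit lies in this chart and is realized as a bounded domain in $N_\p^-$. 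Consequently every $g\in G_0$ factors as $g=l_p g_k u_p$, with uniqueness from $\g=\n_\p^-+\k^\C+\n_\p^+$ being a vector-space direct sum. Part (b) is then the same bounded domain, contractible because $G_0/K\cong\p$ via the Cartan decomposition.

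\emph{For (c).} Given $g\in G_0$ apply (a) to write $g=l_p g_k u_p$ with $g_k\in K^\C$. The group $K^\C$ is complex reductive with Cartan subgroup $H$ and opposite unipotents $N_k^\pm$, and Birkhoff decomposition inside $K^\C$ gives $K^\C=\bigsqcup_{w\in W(K)} N_k^-\mathbf{w} H N_k^+$. Pick the unique $w\in W(K)$ with $g_k$ in the corresponding stratum and fix a representative $\mathbf{w}$; then $g_k=l_k\mathbf{w} m'a' u_k$ uniquely, and moving $m'a'$ across $\mathbf{w}$ (which normalizes $TA$) produces $g_k=l_k\, ma\,\mathbf{w}\,u_k$. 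Substituting yields (\ref{birkhoff_refined}).

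\emph{For (d) and (e).} The structural fact I use is that $W(K)$ permutes the noncompact positive roots among themselves---they are the weights of the $K$-module $\n_\p^+$---so $\mathbf{w} N_\p^\pm \mathbf{w}^{-1}=N_\p^\pm$ for every $w\in W(K)$. Combined with $l_k\in N_k^-\cap \mathbf{w} N_k^-\mathbf{w}^{-1}$, this yields $l_p l_k\in N^-\cap \mathbf{w} N^-\mathbf{w}^{-1}$, while $u_k u_p\in N^+$. Moving $ma\in H$ past $\mathbf{w}$ in (\ref{birkhoff_refined}) puts $g$ in the form (\ref{birkhoff2}), proving $g\in\Sigma_w^G$ and hence $g\in\Sigma_w^{G_0}$. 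This shows $G_0=\bigsqcup_{W(K)}\Sigma_w^{G_0}$, disjointness being inherited from $G$. Uniqueness of the six factors in (\ref{birkhoff_refined}) follows from uniqueness in (a) and uniqueness of Birkhoff factorization in $K^\C$. To see that $g\mapsto(l_p,l_k,m)$ is already a diffeomorphism onto $D(G_0,K)\times(N_k^-\cap\mathbf{w} N_k^-\mathbf{w}^{-1})\times T$, one checks that the remaining factors $a$, $u_k$, $u_p$ are reconstructed from $(l_p,l_k,m)$ via the reality constraint $\sigma(g)=g$ (noting that $\sigma$ swaps $\n_\p^\pm$ and $\n_k^\pm$ and inverts $A$), in direct analogy with the compact case (Theorem \ref{compact1}), where $u$ is determined by $l$ and $m$.

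\emph{Main obstacle.} The crux is (a): the inclusion $G_0\subset N_\p^- P$ is not purely algebraic, but is essentially the Harish-Chandra embedding theorem for Hermitian symmetric spaces of noncompact type. Once (a) is in hand, parts (b)--(e) reduce to applying the Birkhoff decomposition of $K^\C$ inside the $K^\C$-factor and tracking how $W(K)$ stabilizes the noncompact root subsystem.
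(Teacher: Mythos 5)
Most of your proposal tracks the paper's own argument: parts (a) and (b) are obtained from the openness of the $G_0$-orbit in $G/P$ together with the Harish--Chandra embedding theorem (the paper cites Theorem 5 of \cite{Pi0}, or Lemma 7.9 of \cite{H}, for exactly this), part (c) is the Birkhoff factorization of the middle factor $g_k\in K^\C$, and part (d) follows from the fact that $W(K)$ permutes the noncompact roots, so the strata of $G$ meeting $G_0$ are indexed by $W(K)$. All of that is fine and is essentially the published route.

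The genuine gap is in (e), specifically surjectivity. You assert that ``the remaining factors $a$, $u_k$, $u_p$ are reconstructed from $(l_p,l_k,m)$ via the reality constraint $\sigma(g)=g$, in direct analogy with the compact case.'' That analogy proves too much: in the compact case every $l\in N^-\cap wN^-w^{-1}$ is realized by an element of $U$, whereas in the noncompact case the whole point is that only $l_p\in D(G_0,K)$ is realized --- so the existence of a solution to the reality constraint is exactly the issue and cannot be taken for granted. Restricting the first factor to $D(G_0,K)$ does not by itself show that the image is the full \emph{product} $D(G_0,K)\times(N_k^-\cap wN_k^-w^{-1})\times T$: a priori the achievable pairs $(l_k,m)$ could depend on $l_p$. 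What is needed is an argument that the fiber over a fixed $l_p$ sweeps out the entire flag manifold of $K^\C$. The paper does this directly: given $l_p\in D(G_0,K)$, choose $g_0=l_pg_ku_p\in G_0$; for $k\in K$ one has $g_0k=l_p(g_kk)(k^{-1}u_pk)$ with $k^{-1}u_pk\in N_\p^+$ because $K$ normalizes $N_\p^+$, so right translation by $K$ fixes $l_p$ while moving $g_k$ through a full right $K$-coset; since $K$ acts transitively on $(B^-\cap K^\C)\backslash K^\C$, one can choose $k$ so that $g_kk$ has any prescribed $l_k$ (and $w$), and a final right translation by $T$ adjusts $m$. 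You should supply this step (or an equivalent one); without it the claimed diffeomorphism onto the product is not established.
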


\begin{proof} There is a natural map $G_0/K\to G/P$ and by Lemma \ref{first lemma}, $G/P$ is a union of $N_p^-$-orbits indexed by $w\in W/W(K)$.  The top stratum corresponds to $w=1$ in $W/W(K)$ and is parameterized by $N_p^-$.  Theorem 5 of \cite{Pi0} shows that this image is contained in the top stratum.  This can also be deduced from Lemma 7.9 on page 388 of \cite{H}.  Parts (a) and (b) of the theorem now follow from this observation.  This also implies that $\Sigma_{w}^{G_0}$ is empty unless $w\in W(K)$ (since $W(K)$ is the $1$ in $W/W(K)$).  Hence part (d) follows from this observation as well.

Part (c) is a consequence of the triangular factorization for $K^{\mathbb C}$ (this is the sense in which
triangular factorization for the noncompact inner case reduces to the compact case).

Because of the uniqueness of the factorization in (c), the map in (e) is 1-1. It remains to show the map is onto. Suppose that we are given $(l_p,l_k,t)$ in the codomain of the map. By definition, there
exists $g_0\in G_0$ such that $g_0=l_pg_ku_p$. Given $k\in K$,
\[
g_0k=l_pg_ku_pk=l_pg_k'u_p',\text{ where } g_k'=g_kk\in K^{\mathbb C} \text{ and }u_p'=k^{-1}u_pk\in N_p^+
\]
because $K$ normalizes  $N_p^+$. Since $K$ acts transitively on the flag space  $(B^-\cap K^{\mathbb C})\backslash K^{\mathbb C}$, we can choose $k$ such that $g_k'$ has triangular factorization of the form
$l_k\mathbf w mau_k$. We can always multiply on the right by a $t'$ to obtain the desired $t$ without affecting the other factors.  This shows the map is onto.
\end{proof}

\begin{corollary}\label{topology} $\phantom{a}$
\begin{enumerate}
\item[(a)] The map $G_0 \to K^{\mathbb C}$ given by $g\to g_k$ is a homotopy equivalence.
\item[(b)] For each $w\in W(K)$ the map
\[
\Sigma^{G_0}_{w}\to \Sigma^{K^{\mathbb C}}_{w}\text{ given by }g \mapsto g_k
\]
induced by part (c) of Theorem \ref{first theorem} is a homotopy equivalence.
\end{enumerate}
\end{corollary}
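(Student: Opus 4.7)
My plan is to express both homotopy equivalences as compositions of obvious equivalences built from the parameterizations supplied by Theorem \ref{first theorem}, using two inputs: the bounded complex domain $D(G_0,K)$ is contractible by Theorem \ref{first theorem}(b), and the inclusion $K \hookrightarrow K^{\mathbb C}$ is a homotopy equivalence via the polar decomposition $K^{\mathbb C} = K\cdot\exp(i\k)$.

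For (a), I would first show that the middle factor $g_k$ actually lies in $K$, not merely in $K^{\mathbb C}$. Applying $\sigma$ to the block triangular factorization $g = l_p g_k u_p$ of $g \in G_0$ and using $\sigma(\n_p^\pm) = \n_p^\mp$ together with the $\sigma$-stability of $\k^{\mathbb C}$ produces a second block triangular factorization of $g$; uniqueness then forces $u_p = \sigma(l_p)$ and $g_k \in (K^{\mathbb C})^\sigma$. Since $G_0$ is connected and the identity component of $(K^{\mathbb C})^\sigma$ is $K$, we conclude $g_k \in K$. A direct check using that $K$ normalizes $N_p^\pm$ then shows that $g \mapsto (l_p(g), g_k(g))$ is a diffeomorphism $G_0 \to D(G_0,K) \times K$. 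Contractibility of $D(G_0,K)$ makes the projection to $K$ a homotopy equivalence, and composing with $K \hookrightarrow K^{\mathbb C}$ proves (a).

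For (b), Theorem \ref{first theorem}(e) supplies $\Sigma^{G_0}_w \cong D(G_0,K) \times (N_k^- \cap wN_k^- w^{-1}) \times T$ via $g \mapsto (l_p, l_k, m)$, while Theorem \ref{compact1} applied to the compact group $K$ gives $\Sigma^K_w \cong (N_k^- \cap wN_k^- w^{-1}) \times T$ via $g_k \mapsto (l_k, m)$. Combining the $\sigma$-argument of (a) with the refined factorization in Theorem \ref{first theorem}(c) shows that $g_k \in K \cap \Sigma^{K^{\mathbb C}}_w = \Sigma^K_w$, and that $g \mapsto g_k$ corresponds under these identifications to the projection $(l_p, l_k, m) \mapsto (l_k, m)$, a trivial bundle with contractible fiber $D(G_0,K)$ and hence a homotopy equivalence $\Sigma^{G_0}_w \to \Sigma^K_w$. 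The inclusion $\Sigma^K_w \hookrightarrow \Sigma^{K^{\mathbb C}}_w$ is a homotopy equivalence by a parallel argument: the Birkhoff parameterization gives $\Sigma^{K^{\mathbb C}}_w \cong (N_k^- \cap wN_k^- w^{-1}) \times T \times A \times N_k^+$ with the extra factors contractible, so the projection onto the first two factors is a homotopy equivalence whose restriction to $\Sigma^K_w$ recovers the compact-case diffeomorphism; by the two-out-of-three property, the inclusion is a homotopy equivalence. Composing yields (b).

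The main obstacle is bookkeeping rather than any deep analytical step: one must carefully trace the identifications between the block triangular, refined, and compact Birkhoff factorizations to certify that $g \mapsto g_k$ really is the asserted projection of coordinates. The one substantive geometric input is that the middle factor lies in the compact real form $K$, which reduces to $\sigma$-invariance and uniqueness of the block triangular factorization.
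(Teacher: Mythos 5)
Your overall architecture (factor both sides using Theorem \ref{first theorem}, use contractibility of $D(G_0,K)$, finish with two-out-of-three) is the right one, but it is routed through a claim that is false: the middle block-triangular factor $g_k$ of $g\in G_0$ does \emph{not} lie in $K$ in general. Your $\sigma$-argument breaks at the first step: since $\sigma(\n_p^{\pm})=\n_p^{\mp}$, applying $\sigma$ to $g=l_pg_ku_p$ yields $g=\sigma(l_p)\sigma(g_k)\sigma(u_p)$ with $\sigma(l_p)\in N_p^{+}$ and $\sigma(u_p)\in N_p^{-}$, i.e.\ a factorization in the \emph{opposite} order $N_p^{+}K^{\mathbb C}N_p^{-}$. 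This is not a second factorization of the same type, so the uniqueness in Theorem \ref{first theorem}(a) cannot be invoked to conclude $u_p=\sigma(l_p)$ and $\sigma(g_k)=g_k$; rearranging $N_p^{+}K^{\mathbb C}N_p^{-}$ into $N_p^{-}K^{\mathbb C}N_p^{+}$ is a nontrivial Gauss-type decomposition that mixes the factors. Concretely, in $\SU(1,1)$ one has
\[
\begin{pmatrix} a & b\\ \bar b & \bar a\end{pmatrix}
=\begin{pmatrix}1&0\\ \bar b/a&1\end{pmatrix}
\begin{pmatrix}a&0\\0&a^{-1}\end{pmatrix}
\begin{pmatrix}1&b/a\\0&1\end{pmatrix},
\qquad |a|^2-|b|^2=1,
\]
so $g_k=\mathrm{diag}(a,a^{-1})$ with $|a|=(1+|b|^2)^{1/2}>1$ whenever $b\neq 0$; thus $g_k\notin K=\U(1)$, and likewise $g\mapsto(l_p,g_k)$ is not onto $D(G_0,K)\times K$ (its image is the twisted set $\{(l_p,\,g_k^0(l_p)k):k\in K\}$, since the fiber of $G_0\to G/P$ over $l_pP$ is a single right $K$-coset $g_0K$). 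Consequently the asserted diffeomorphism $G_0\cong D(G_0,K)\times K$ via $(l_p,g_k)$, and the containment $g_k\in\Sigma_w^{K}$, both fail as stated.

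The good news is that the claim $g_k\in K$ is not needed, and excising it recovers what is essentially the paper's (implicit) argument. For (b): Theorem \ref{first theorem}(e) gives $\Sigma_w^{G_0}\cong D(G_0,K)\times(N_k^-\cap wN_k^-w^{-1})\times T$ via $(l_p,l_k,m)$, the Birkhoff parametrization gives $\Sigma_w^{K^{\mathbb C}}\cong(N_k^-\cap wN_k^-w^{-1})\times T\times A\times N_k^+$ with the last two factors contractible, and by Theorem \ref{first theorem}(c) the composite of $g\mapsto g_k$ with the retraction of $\Sigma_w^{K^{\mathbb C}}$ onto $(N_k^-\cap wN_k^-w^{-1})\times T$ is exactly the projection $(l_p,l_k,m)\mapsto(l_k,m)$, a homotopy equivalence because $D(G_0,K)$ is contractible; two-out-of-three then finishes. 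Part (a) is the case of the full group: $g\mapsto g_k$ followed by the retraction $K^{\mathbb C}\to K$ (polar decomposition) is, in the coordinates of (c), the projection killing the contractible factor $D(G_0,K)$. So your fix is purely local: replace ``$g_k\in K$ and project to $\Sigma_w^K$'' by ``compose with the deformation retraction of $\Sigma_w^{K^{\mathbb C}}$ onto its compact part,'' and delete the $\sigma$-argument.
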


We now turn to root subgroup factorization. In the compact case, a special role was played by the function $k\colon \C\to\mathrm{SU}(2)$ defined by
\begin{equation}\label{factor_plus}
k(\zeta)=\mathbf a_+(\zeta)
\begin{pmatrix}1&-\bar{\zeta}\\\zeta&1
\end{pmatrix}=\begin{pmatrix}1&0\\\zeta&1
\end{pmatrix}\begin{pmatrix}\mathbf a_+(\zeta)&0\\0&\mathbf a_+(\zeta)^{-1}
\end{pmatrix}\begin{pmatrix}1&-\bar{\zeta}\\0&1
\end{pmatrix} \in \SU(2)
\end{equation}
where $\mathbf a_+(\zeta)=(1+\vert\zeta\vert^2)^{-1/2}$.  By composing copies of this function with root homomorphisms, interleaving the compositions into a minimal sequence of simple reflections factoring $w$, and multiplying out the results we were able to parameterize $\Sigma_w^{U}$.  To accommodate the new situation where the simple reflections may be associated to noncompact type roots, and to parameterize $\Sigma_{w}^{\dot{G}_0}$, we define a function $q\colon \Delta\to \mathrm{SU}(1,1)$ by
\begin{equation}\label{factor_minus}
q(\zeta)=\mathbf a_-(\zeta)
\begin{pmatrix}1&\bar{\zeta}\\\zeta&1
\end{pmatrix}=\begin{pmatrix}1&0\\\zeta&1
\end{pmatrix}\begin{pmatrix}\mathbf a_-(\zeta)&0\\0&\mathbf a_-(\zeta)^{-1}
\end{pmatrix}\begin{pmatrix}1&\bar{\zeta}\\0&1
\end{pmatrix} \in \SU(1,1)
\end{equation}
where $\mathbf a_-(\zeta)=(1-\vert\zeta\vert^2)^{-1/2}$.

\begin{theorem}\label{root_factorization_g0} Fix $w\in W(K)$ and a representative $\mathbf w\in N_{ K}(T)$ for $w$, then determine positive simple roots $\gamma_1,\dots,\gamma_n$ with associated simple reflections $r_1,\dots,r_n$, and positive roots $\tau_1,\dots,\tau_n$ as in Lemma \ref{weylfactorization}.  Set $\w_j'=\mathbf r_j..\mathbf r_1$ and $\iota_{\tau_j}(g)=\w_{j-1}'\iota_{\gamma_j}(g)(\w_{j-1}')^{-1}$ for each $g\in \SL(2,\C)$.
\begin{enumerate}
\item[(a)] Each $g\in \Sigma^{G_0}_w$ has a unique factorization
\[
g=\mathbf w \,\iota_{\tau_n}(g(\zeta_n))..\iota_{\tau_1}(g(\zeta_1)) t
\]
for some $t\in T$ and $(\zeta_1,\dots,\zeta_n)\in\C^n$, where if $\tau_j$ is a noncompact type
root, then $\vert \zeta_j\vert<1$ and $g(\zeta_j)=q(\zeta_j)$ from (\ref{factor_minus}), and if
$\tau_j$ is a compact type
root, then $\zeta_j$ is unrestricted in $\mathbb C$ and $g(\zeta_j)=k(\zeta_j)$ as in (\ref{factor_plus}).
\item[(b)] If $g\in \Sigma_w^{G_0}$ has the factorization in part (a) then the factor $a(g)$ of $g$ from part (c) of Theorem \ref{first theorem} has the product form
    \[
    a(g)=\prod_{j=1}^n \mathbf a(\zeta_j)^{h_{\tau_j}}
    \]
    where $\mathbf a(\zeta_j)=\mathbf a_-(\zeta_j)$ if $\tau_j$ is a noncompact type root and $\mathbf a(\zeta_j)=\mathbf a_+(\zeta_j)$ if $\tau_j$ is a compact type root.
\end{enumerate}
\end{theorem}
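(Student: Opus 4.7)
The plan is to establish part (b) first by a direct LDU calculation and then deduce part (a) by combining the block-triangular decomposition of Theorem \ref{first theorem} with the compact-case Theorem \ref{Lutheorem} applied to $K$.

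For part (b), each factor $\iota_{\tau_j}(g(\zeta_j))$ inherits an LDU decomposition from (\ref{factor_plus}) or (\ref{factor_minus}): under $\iota_{\gamma_j}$ and conjugation by $\mathbf{w}_{j-1}'$, the lower and upper triangular pieces land in $N^-$ and $N^+$ respectively, since by Lemma \ref{weylfactorization} the root $\tau_j=\mathbf{w}_{j-1}'\cdot\gamma_j$ is positive, while the middle piece becomes $\mathbf{a}(\zeta_j)^{h_{\tau_j}}\in A$. Multiplying all factors in the prescribed order and sliding the lower pieces to the left and the upper pieces to the right (conjugation by elements of $H$ preserves $N^{\pm}$), the diagonal pieces multiply in the abelian group $H$, yielding $a(g)=\prod_{j}\mathbf{a}(\zeta_j)^{h_{\tau_j}}$.

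For part (a), given $g\in\Sigma_w^{G_0}$, use Theorem \ref{first theorem}(c,e) to obtain the unique decomposition $g = l_p l_k ma\,\mathbf{w}\,u_k u_p$. Apply Theorem \ref{Lutheorem}, transcribed for $K$, to $g_k = l_k ma\,\mathbf{w}\,u_k\in\Sigma_w^{K^{\mathbb{C}}}$, restricted to the compact-type subsequence of $\tau_j$'s, to obtain parameters $\zeta_j\in\mathbb{C}$ for each compact-type $\tau_j$ together with an element of $T$. Because $W(K)$ preserves the set of positive noncompact roots (as $\n_p^+$ is $K^{\mathbb{C}}$-stable in the Hermitian case), every positive noncompact root occurs among $\tau_1,\ldots,\tau_n$, so the number of noncompact $\tau_j$'s equals $\dim_{\mathbb{C}}D(G_0,K)$; these are the variables $\zeta_j\in\Delta$ that will parametrize $l_p$. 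To interleave the compact and noncompact factors in the prescribed order, use that $\iota_{\tau_j}(k(\zeta_j))\in K^{\mathbb{C}}$ normalizes both $N_p^-$ and $N_p^+$, while $\iota_{\tau_j}(q(\zeta_j))$ for noncompact $\tau_j$ has lower and upper pieces in $N_p^-$ and $N_p^+$; straightforward commutation rewrites the product in the desired block-triangular form. Uniqueness follows from the uniqueness statements in Theorems \ref{first theorem}(e) and \ref{Lutheorem} together with injectivity of the noncompact parametrization.

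The main obstacle is surjectivity of the noncompact parametrization $\Delta^{n_{nc}}\to D(G_0,K)$, that is, showing every $l_p\in D(G_0,K)$ is realized by a product of the prescribed form with each noncompact $|\zeta_j|<1$. This is the step flagged in the introduction as non-algebraic. I would argue it by showing that the parametrization is a local diffeomorphism onto an open subset (with the explicit Jacobian arising from the LDU computation in part (b)), that its image is closed in $D(G_0,K)$ (using that $q(\zeta)$ escapes $\mathrm{SU}(1,1)$ as $|\zeta|\to 1$, so a sequence of parameters approaching the polydisk boundary cannot converge to an interior point of the bounded domain $D(G_0,K)$), and then concluding surjectivity from connectedness of $D(G_0,K)$.
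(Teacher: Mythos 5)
Your overall topological strategy for part (a) --- show the parameter map is injective and open, show its image is closed, and conclude by connectedness of the target (which the paper gets from Corollary \ref{topology}) --- is exactly the paper's strategy, and your sketch of part (b) is a looser version of the paper's induction (though ``sliding the upper pieces to the right'' past lower-triangular factors requires the refactorization relative to $N^+=(N^+\cap w'N^-(w')^{-1})(N^+\cap w'N^+(w')^{-1})$, not just conjugation by elements of $H$; the paper does this carefully and it is what guarantees no extra $H$-contribution appears). The genuine gap is in your closedness argument, which is precisely the step the introduction flags as non-algebraic. Your justification --- ``$q(\zeta)$ escapes $\SU(1,1)$ as $|\zeta|\to 1$, so parameters approaching the polydisk boundary cannot converge into the bounded domain $D(G_0,K)$'' --- is a rank-one intuition that does not survive the passage to an interleaved product. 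Two things can go wrong that you do not rule out: the compact-type parameters live in unbounded planes and could a priori diverge while the group element converges; and, more seriously, in the product formula $a(g)=\prod_j \mathbf a(\zeta_j)^{h_{\tau_j}}$ the factors $(1-|\zeta_j|^2)^{-1/2}$ blowing up for noncompact roots could in principle be compensated by factors $(1+|\zeta_i|^2)^{-1/2}$ tending to zero for compact roots, so that $a(g_k)$ (which must converge, since $a$ is continuous on $\Sigma_w^{G_0}$) stays bounded even as the parameters escape. Nothing in your argument excludes this cancellation.

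The paper closes this gap by combining the explicit formula of part (b) with a structural fact about Hermitian symmetric root systems (Lemma 3 of Wolf): there is a unique simple root $\phi=\alpha_r$ that occurs with coefficient $1$ in every noncompact positive root and coefficient $0$ in every compact positive root. Expanding $h_{\tau_j}$ in simple coroots, the $\alpha_r$-component of $a(g_k)$ is a product over only the noncompact $\tau_j$ of terms $(1-|\zeta_j^{(k)}|^2)^{-1}$, all with the same sign, so its boundedness forces each noncompact parameter to stay in a compact subset of the disk; once that is established, the components for $i<r$ force the compact parameters to stay bounded as well. Without this (or some substitute isolating the noncompact parameters), the closedness claim is unproven. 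A secondary caveat: your reduction of the algebra of part (a) to Theorem \ref{first theorem} plus Theorem \ref{Lutheorem} is not as clean as stated, because moving the $N_\p^+$-pieces of the noncompact factors rightward past later $N_\p^-$-pieces requires a block Birkhoff refactorization whose correction terms feed back into $g_k$ and $l_p$; consequently $l_p$ is not a function of the noncompact parameters alone and $g_k$ is not a function of the compact parameters alone, so the ``parametrization $\Delta^{n_{nc}}\to D(G_0,K)$'' is not well defined as you describe it. The paper avoids this by running a single induction that computes the full triangular factorization of $g^{(n)}$ directly.
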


\begin{proof}We must show that the map
\begin{equation}\label{openmap}
\{(\zeta_1,..,\zeta_n)\} \times T\to \Sigma^{G_0}_w\text{ given by }((\zeta_j),t) \mapsto g
\end{equation}
where $g$ is defined as in part (a), is a
diffeomorphism. Here it is understood that if the $j$th root is of
noncompact type, then $g(\zeta_j)=q(\zeta_j)$ and $\vert \zeta_j\vert<1$, and if the $j$th root is of
compact type, then $g(\zeta_j)=k(\zeta_j)$ and $\zeta_j$ is unrestricted in $\mathbb C$.

We first calculate the triangular decomposition for
\[
g^{(n)}:=\iota_{\tau_n}(g(\zeta_n))..\iota_{\tau_1}(g(\zeta_1))
\]
by induction on $n$. In the process we
will prove part (b), which will be used in the proof of part (a).  First note that since $\tau_j=(w_{j-1}')^{-1}\cdot \gamma_j$ and $\iota_{\tau_j}$ preserves triangular factorizations,
\begin{eqnarray*}
\iota_{\tau_j}(g(\zeta_j)) & = & \iota_{\tau_j}(
\begin{pmatrix}1&0\\\zeta_j&1
\end{pmatrix})\mathbf a_\pm(\zeta_j)^{h_{\tau_j}}\iota_{\tau_j}(
\begin{pmatrix}1&\pm\bar{\zeta_j}\\0&1
\end{pmatrix}) \\
& = & \exp(\zeta_j
f_{\tau_j})\mathbf a_\pm(\zeta_j)^{h_{\tau_j}}(\mathbf
w_{j-1}')^{-1}\exp(\pm\bar{\zeta}_j e_{\gamma_j})\mathbf w_{j-1}'
\end{eqnarray*}
is a triangular factorization (the plus/minus case is used for the compact/noncompact root case, respectively).   In what follows, we will simply write $\mathbf a(\zeta_j)$ for $\mathbf a_\pm(\zeta_j)$ since the appropriate sign can be inferred from the type of the corresponding root $\tau_j$.

Suppose that $n=2$. Then
\begin{equation}\label{n=2case1}
g^{(2)}=\exp(\zeta_2 f_{\tau_2})\mathbf a(\zeta_2)^{h_{\tau_2}}\mathbf
r_1^{-1}\exp(\pm\bar{\zeta}_2 e_{\gamma_2})\mathbf r_1 \exp(\zeta_1
f_{\gamma_1})\mathbf a(\zeta_1)^{h_{\gamma_1}}\exp(\pm\bar{\zeta}_1
e_{\gamma_1})
\end{equation}
where $\pm \bar{\zeta}_j$ occurs according to whether $\tau_j$ is a compact/noncompact type root, respectively.  The key point is that
\begin{eqnarray*}
&  & \mathbf r_1^{-1}\exp(\pm\bar{\zeta}_2 e_{\gamma_2})\mathbf r_1 \exp(\zeta_1 f_{\gamma_1}) \\
 & = & \mathbf r_1^{-1} \exp(\pm\bar{\zeta}_2 e_{\gamma_2})
\exp(\zeta_1 e_{\gamma_1})\mathbf r_1 \\
& = & \mathbf r_1^{-1} \exp(\zeta_1
e_{\gamma_1})\widetilde u \mathbf r_1,\quad (\text{for some }
 \widetilde u\in N^+\cap r_1^{-1} N^+ r_1) \\
& = & \exp(\zeta_1
f_{\gamma_1})\mathbf u, \quad (\text{for some }  \mathbf u\in
N^+).
\end{eqnarray*}
Insert this calculation into (\ref{n=2case1}). We then see
that $g^{(2)}$ has a triangular factorization $g^{(2)}=l^{(2)}a^{(2)}u^{(2)}$, where
\[
a^{(2)}=\mathbf a(\zeta_1)^{h_{\tau_1}}\mathbf a(\zeta_2)^{h_{\tau_2}}
\]
and
\begin{eqnarray}\label{2ndcase1}
l^{(2)} & = & \exp(\zeta_2f_{\tau_2}) \exp(\zeta_1\mathbf a(\zeta_2)^{-\tau_1(h_{\tau_2})}f_{\tau_1}) \\
& = & \exp(\zeta_2
f_{\tau_2}+\zeta_1\mathbf
a(\zeta_2)^{-\tau_1(h_{\tau_2})}f_{\tau_1}). \nonumber
\end{eqnarray}
Note that with $l$-factor, the sign difference between the calculations in the compact and noncompact case is only present in the corresponding type of $\mathbf a(\zeta_2)$.

To apply induction, we assume that $g^{(n-1)}$ has a triangular
factorization $g^{(n-1)}=l^{(n-1)}a^{(n-1)}u^{(n-1)}$ with
\begin{equation}\label{induction1}
l^{(n-1)}=\exp(\zeta_{n-1}f_{\tau_{n-1}})\widetilde l \in N^-\cap
(w_{n-1}')^{-1}N^+w_{n-1}'=\exp(\sum_{j=1}^{n-1}\mathbb C
f_{\tau_j}),
\end{equation}
for some $\widetilde l \in N^-\cap (w_{n-2}')^{-1}N^+w_{n-2}'=\exp(\sum_{j=1}^{n-2}\mathbb C f_{\tau_j})$,
and
\begin{equation}\label{prod_form_of_a}
a^{(n-1)}= \prod_{j=1}^{n-1}\mathbf a(\zeta_j)^{h_{\tau_j}}.
\end{equation}
We have established this for $n-1=1,2$. For $n \ge 3$
\begin{eqnarray*}
 g^{(n)} & = & \exp(\zeta_n f_{\tau_n})\mathbf a(\zeta_n)^{h_{\tau_n}}(\mathbf w_{n-1}')^{-1}\exp(\pm\bar{\zeta}_n e_{\gamma_n})\mathbf w_{n-1}'\exp(\zeta_{n-1}
f_{\tau_{n-1}})\widetilde l a^{(n-1)}u^{(n-1)} \\
 & = & \exp(\zeta_n f_{\tau_n})\mathbf a(\zeta_n)^{h_{\tau_n}}(\mathbf w_{n-1}')^{-1}\exp(\pm\bar{\zeta}_ne_{\gamma_n}) \widetilde u\mathbf w_{n-1}'a^{(n-1)}u^{(n-1)},
\end{eqnarray*}
where $\widetilde u= \mathbf w_{n-1}'\exp(\zeta_{n-1} f_{\tau_{n-1}})\widetilde l (\mathbf w_{n-1}')^{-1}\in w_{n-1}' N^{-} (w_{n-1}')^{-1} \cap
N^+$. Now we factor $\exp(\pm\bar{\zeta}_ne_{\gamma_n}) \widetilde
u\in N^+$ as $\widetilde u_1\widetilde u_2$, relative to the decomposition
\[
N^+=\left(N^+\cap w_{n-1}'N^-(w_{n-1}')^{-1}\right)\left(N^+\cap
w_{n-1}'N^+(w_{n-1}')^{-1}\right)
\]
and let
\[
\mathbf l=\mathbf a(\zeta_n)^{h_{\tau_n}}(\mathbf
w_{n-1}')^{-1}\widetilde u_1 \mathbf w_{n-1}'\mathbf
a(\zeta_n)^{-h_{\tau_n}}\in N^- \cap
(w_{n-1}')^{-1} N^+
w_{n-1}.
\]
Then $g^{(n)}$ has triangular decomposition
\begin{eqnarray*}
g^{(n)} & = & \left(\exp(\zeta_n f_{\tau_n})\mathbf
l\right) \left(\mathbf
a(\zeta_n)^{h_{\tau_n}}a^{(n-1)}\right)\left(
(a^{(n-1)})^{-1}\mathbf w_{n-1}'\widetilde u_2(\mathbf w_{n-1}')^{-1}a^{(n-1)}u^{(n-1)}\right) \\
& = & l^{(n)}a^{(n)}u^{(n)}
\end{eqnarray*}
by (\ref{prod_form_of_a}).

Now suppose that we multiply this triangular decomposition on the left by $\mathbf w$ (as in part (a)).
Because the $\tau_j$, $j=1,..,n$, are the positive roots which are mapped
to positive roots by $w$, it follows that $l^{(n)}$ will be conjugated by $\mathbf w$ into
another element in $N^-$.  It follows that $g$, as defined in part (a), is in $\Sigma^{G_0}_w$.

Now we want to draw some conclusions. First note that the inductive calculation of the triangular decomposition
implies part (b) of the Theorem. We can also see that
the map (\ref{openmap}), which has domain a product of disks and affine planes, is 1-1 and open. Because  $\Sigma^{G_0}_w$ is connected (by
Corollary \ref{topology}), to conclude that the map (\ref{openmap}) is a diffeomorphism, it suffices to show
that the map (\ref{openmap}) has a closed image in $\Sigma^{G_0}_w$. Suppose that $(g_k)_{k=1}^\infty$ is a sequence of elements
in the image of (\ref{openmap}) (these are elements that have root subgroup factorizations), and suppose this sequence converges to $g'\in\Sigma^{G_0}_w$. We must show that $g'$ has a root subgroup factorization. For each $k$, consider the unique triangular factorization
\[
g_k=l_k \mathbf wm_ka_k u_k, \quad  l_k\in N^-\cap w N^-w^{-1}.
\]
Since $g'\in\Sigma^{G_0}_w$ with triangular factorization $g'=l'\mathbf wm'a'u'$, $l'\in N^-\cap w N^- w^{-1}$, we know that $a_k\to a'$ as $k\to\infty$, and hence the sequence $(a_k)_{k=1}^\infty$ is bounded in $A$.

We now will show that the formula
in part (b) for the $a$-component, applied to each $g_k$ (together with a pivotal fact about roots for Hermitian symmetric spaces), implies that the associated sequence of parameters in the domain must remain bounded as $k\to\infty$. This formula can be written as
$$ a(g_k)=\prod_{j=1}^n \mathbf a(\zeta^{(k)}_j)^{h_{\tau_j}} =\prod_{i=1}^{r}\left(\prod_{j=1}^n(1\pm\vert\zeta^{(k)}_{\tau_j}\vert^2)^{-\frac12 m_i(\tau_j)h_{\alpha_i}}\right)^{h_{\alpha_i}}  $$
where the $\alpha_i$, $1\le i\le r$, are the simple positive roots, for a root $\gamma$ we write $\gamma=\sum m_i(\gamma) \alpha_i$, and the $\pm$ is positive if and only if the corresponding root is of compact type. What we can a priori conclude from this formula is that for each $1\le i\le rank(\g)$,
\begin{equation}\label{keyprod}\prod_{j=1}^n(1\pm\vert\zeta^{(k)}_{\tau_j}\vert^2)^{-m_i(\tau_j)}\end{equation}
remains bounded as $k\to\infty$. If all of the signs are the same, then we can conclude that the sequence $\zeta^{(k)}_{\tau_j}$
remains bounded (in the plane for a compact type root, and in the disk for a noncompact type root) as $k\to\infty$; however, if there are mixed signs, then there could be terms going to zero and terms going to infinity which perfectly balance, and we cannot draw any conclusion.

For simplicity we now assume $\g_0$ is simple (otherwise we work with its unique simple components). The basic fact about
roots of an irreducible Hermitian symmetric space is that there is exactly one simple root, say $\phi=\alpha_r$, such that each positive root
can be written either as
$$\alpha=\sum_{l=1}^{r-1}m_l(\alpha)\alpha_l \text{ or } \alpha=\phi+\sum_{l=1}^{r-1}m_l(\alpha)\alpha_l$$
In the first case $\alpha$ is of compact type, and in the second case $\alpha $ is of non-compact type.
This follows from Lemma 3 of \cite{Wolf} (Remark: this fact is the key to Wolf's proof of Cartan's classification of Hermitian
symmetric spaces; of course one could also use the classification of such spaces to deduce this as well).
For $i=r$ in the preceding paragraph, all of the signs in the product \ref{keyprod} are negative, and all the $\tau_j$ which
are of noncompact type occur in this product. Thus for all of the $\tau_j$ which are of noncompact type,
the sequence $\zeta^{(k)}_{\tau_j}$ will be bounded in the disk as $k\to\infty$. Once this is established, we can consider $i<r$
and draw the same conclusion for $\tau_j$ of compact type.

We can therefore find a subsequence of the sequence of parameters which converges to an element of the domain (the plane for
a compact type root, the disk for a noncompact type root). The sequence $g_k$ will then converge to the group element
corresponding to this limiting parameter by continuity.  This limit must be $g'$, and hence we obtain a root subgroup factorization for $g'$. This completes the proof.
\end{proof}

\section{Haar Measure in Root Subgroup Coordinates}\label{Haarmeasuresection}

In closing we mention one striking feature of root subgroup factorization, the fact that Haar measure
is a product in these coordinates. The analogue of this in the compact case is due to Lu in \cite{Lu}, where she obtains product formulas for Kostant's harmonic forms on $\dot{U}/\dot{T}$, one of which is the invariant volume on $\dot{U}/\dot{T}$.  The argument here is more direct.

\begin{theorem}\label{haartheorem} With $w=1$ and notation as in Theorem \ref{root_factorization_g0} we obtain a parametrization of the open dense subset $\Sigma_1^{\dot G_0}\subset \dot G_0$ by a product of copies of complex planes and disks, together with the torus $T$. In terms of the complex parameters $(\zeta_j)$ and $t\in \dot T$ for
\[
g= i_{\tau_{\mathbf n}}(g(\zeta_{\mathbf n}))..i_{\tau_1}(g(\zeta_1)) t \in \Sigma^{\dot G_0}_1
\]
where $g(\zeta_j)=k(\zeta_j)$ when $\tau_j$ is of compact type (resp. $g(\zeta_j)=q(\zeta_j)$ when $\tau_j$ is of noncompact type), then Haar measure for $\dot{G}_0$ is (up to a constant)
\[
d\lambda_{\dot G_0}(g) =\left(\prod_{j=1}^{\mathbf n} \mathbf a(\zeta_{j})^{2\delta(h_{\tau_j})+2}\vert d\zeta_j \vert\right) d\lambda_{\dot T}(t)
\]
where $\mathbf a(\zeta_j)=\mathbf a_\pm(\zeta_j)$ according to whether $\tau_j$ is of compact/noncompact type, and where $d\lambda_{\dot T}(t)$ denotes Haar measure for $\dot{T}$.
\end{theorem}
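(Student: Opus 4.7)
The plan is to establish the formula by induction on $\mathbf{n}$, the number of factors in the root subgroup factorization, using the explicit product expression for $a(g)$ from part (b) of Theorem \ref{root_factorization_g0} as the central bookkeeping device. First, by right-invariance of Haar measure under the right action of $T$, the torus coordinate splits off cleanly as $d\lambda_T(t)$, and it suffices to analyze the pushforward measure on $G_0/T$ in the coordinates $(\zeta_1,\ldots,\zeta_\mathbf{n})$.

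The base case $\mathbf{n}=1$ reduces to a rank-one calculation. For $\mathrm{SU}(2)$, a direct computation using the matrix form of $k(\zeta)$ in \eqref{factor_plus} shows that the pullback of Haar measure to the big cell is $c(1+|\zeta|^2)^{-2}|d\zeta|\,d\lambda_T(t)$; for $\mathrm{SU}(1,1)$, the analogous computation with $q(\zeta)$ from \eqref{factor_minus} yields $c(1-|\zeta|^2)^{-2}|d\zeta|\,d\lambda_T(t)$ on the unit disk. Since $\delta(h_\alpha)=1$ for the simple root $\alpha$, both agree with the exponent $2\delta(h_\alpha)+2=4$ predicted by the theorem.

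For the inductive step I would rewrite $g^{(n)} = \iota_{\tau_n}(g(\zeta_n))\cdot g^{(n-1)}$ as
$$g^{(n)} = g^{(n-1)}\cdot\big((g^{(n-1)})^{-1}\iota_{\tau_n}(g(\zeta_n))g^{(n-1)}\big),$$
so that the new factor sits on the right of $g^{(n-1)}$ after conjugation. The conjugate still lies in a rank-one subgroup whose root type matches $\tau_n$, but its $\zeta_n$-coordinate is rescaled by the adjoint action of $g^{(n-1)}$. By bi-invariance of Haar measure, the rank-one base case applies, producing an additional Jacobian from this rescaling that involves the character of $a(g^{(n-1)})$ on the root space $\g_{\tau_n}$. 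Expanding $a(g^{(n-1)})$ as a product over $j<n$ via part (b) of Theorem \ref{root_factorization_g0} and summing accumulated contributions over all inductive steps, the exponent at each $\mathbf{a}(\zeta_j)$ becomes $4 + 2\sum_{k>j}\langle h_{\tau_j},h_{\tau_k}\rangle$ for an appropriate pairing; the identity $2\delta = \sum_{\tau>0}\tau$, together with the fact that $\{\tau_1,\ldots,\tau_\mathbf{n}\}$ exhausts the positive roots when $w=1$, then collapses this sum to exactly $2\delta(h_{\tau_j})+2$.

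The main obstacle is verifying the adjoint-action Jacobian in the inductive step: one must check that the rescaling scales the rank-one measure $|d\zeta_n|$ by exactly $a(g^{(n-1)})^{-2h_{\tau_n}}$, with the correct sign convention in the noncompact-root case where $\iota_{\tau_n}$ ranges in $\mathrm{SU}(1,1)$ rather than $\mathrm{SU}(2)$. A cleaner alternative route, which avoids the recursive conjugation, is to invoke the standard Birkhoff-cell factorization of Haar measure on $G_0$ as a multiple of $a(g)^{-2\rho}\,dl\,dm\,da\,du$ (suitably adapted to the block decomposition so as to account for the bounded domain factor $l_p\in D(G_0,K)$), and then directly substitute the explicit root-subgroup-coordinate formulas for $l,a,u$ that the inductive calculation in the proof of Theorem \ref{root_factorization_g0} already constructs; this reduces the entire proof to a single Jacobian computation combined with part (b) of that theorem.
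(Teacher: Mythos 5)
There is a genuine gap in your main inductive route. The step where you rewrite $g^{(n)}=g^{(n-1)}\cdot\bigl((g^{(n-1)})^{-1}\iota_{\tau_n}(g(\zeta_n))g^{(n-1)}\bigr)$ and assert that the conjugate ``still lies in a rank-one subgroup whose root type matches $\tau_n$, with $\zeta_n$ rescaled'' is not correct: $g^{(n-1)}$ is a generic group element, not an element of $H$ or of $N_U(T)$, so $\mathrm{Ad}_{g^{(n-1)}}$ does not preserve $\g_{-\tau_n}\oplus\C h_{\tau_n}\oplus\g_{\tau_n}$, and the conjugated factor does not live in a standard rank-one subgroup at all. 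More fundamentally, the parametrization $(\zeta_1,\dots,\zeta_{\mathbf n},t)\mapsto g$ is not a product of Haar-measure-preserving group multiplications, so ``bi-invariance'' cannot be invoked factor by factor; the Jacobian genuinely involves the projection of $\mathrm{Ad}_{a(g)u(g)}$ onto $\n^-$. There is also a bookkeeping gap at the end: the identity $2\delta=\sum_{\tau>0}\tau$ only controls the sum $\sum_{k\neq j}\tau_k(h_{\tau_j})=2\delta(h_{\tau_j})-2$, whereas your accumulation produces the one-sided sum $\sum_{k>j}\tau_k(h_{\tau_j})$. That each one-sided sum separately equals $\delta(h_{\tau_j})-1$ is true but is a nontrivial consequence of the reduced-word (convex) ordering of the $\tau_j$; it is exactly Lemma \ref{alglemma} of the paper, proved using $\mathrm{Inv}(w_{n-1})=\{\tau_k:k<n\}$ and the fact that $\delta$ of a simple coroot is $1$. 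You cannot get it from exhaustion of the positive roots alone.

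Your ``cleaner alternative route'' is in fact the paper's strategy: Lemma \ref{haarlemma} gives $d\lambda_{\dot G_0}(g)=a(g)^{4\delta}\,d\lambda_{\dot N^-}(l(g))\,d\lambda_{\dot T}(m(g))$ (note that for $g$ in the real form there are no separate $da$, $du$ factors --- $g$ is determined by $l(g)$ and $m(g)$), and part (b) of Theorem \ref{root_factorization_g0} converts $a(g)^{4\delta}$ into $\prod_\tau(1\pm|\zeta_\tau|^2)^{-2\delta(h_\tau)}$. But the step you describe as ``a single Jacobian computation'' --- expressing $d\lambda_{\dot N^-}(l(g))$ in the $\zeta$ coordinates --- is the technical heart of the proof (Lemma \ref{nillemma}): it requires re-running the induction from the factorization theorem, the identity of Lemma \ref{alglemma}, and a verification that a certain compression map $u\mapsto(u_0uu_0^{-1})_1$ on $\dot N^+\cap w'_{n-1}\dot N^-(w'_{n-1})^{-1}$ is volume preserving (its derivative is the compression of a unipotent operator, hence has determinant $1$). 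As written, your proposal identifies the right skeleton but supplies neither the correct inductive mechanism nor the two lemmas that make the exponents come out to $2\delta(h_{\tau_j})+2$.
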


\begin{remark}\label{haarremark} (a) This formula for Haar measure can alternatively be
written as
\[
d\lambda_{\dot G_0}(g)=d\lambda_{\dot T}(t)\prod_{\tau>0}\frac{\vert d\zeta_{\tau}\vert}{(1\pm\vert \zeta_{\tau}\vert^2)^{1+\delta(h_{\tau})}}
\]
where we choose the plus sign for compact roots, the negative sign for noncompact roots, $\zeta_{\tau}$ is understood
to be bounded by one when $\tau$ is noncompact, and $\zeta_{\tau}=\zeta_j$ when $\tau=\tau_j$.

(b) When $\dot{\mathfrak g}$ is simply laced, $\delta(h_{\tau})$ is the height of the positive root $\tau$. Thus in the simply laced case,
\[
d\lambda_{\dot G_0}(g)=d\lambda_{\dot T}(t)\prod_{\tau>0}\frac{\vert d\zeta_{\tau}\vert}{(1\pm\vert \zeta_{\tau}\vert^2)^{1+height(\tau)}}\]
\end{remark}

Denote the triangular decomposition for
$g\in \Sigma^{\dot G_0}_1$ by
$$g=l(g)m(g)a(g)u(g)$$ Recall that $g$ is uniquely determined by $l(g)\in\dot N^-$ and $m(g)\dot T$.  The following formula should be attributed to Harish-Chandra:

\begin{lemma}\label{haarlemma} Up to a normalization
\[
d\lambda_{\dot G_0}(g)=a(g)^{4\delta}d\lambda_{\dot N^-}(l(g)) d\lambda_{\dot T}(m(g))
\]
where (by slight abuse of notation) it is understood that we are restricting Haar measure for $\dot N^-$ to the intersection of $\dot N^-$ with the image of $\Sigma^{\dot G_0}_1$.
\end{lemma}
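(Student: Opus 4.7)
The plan is to compute the pullback $\Psi^*(d\lambda_{\dot G_0})$ under the diffeomorphism $\Psi\colon(l,m)\mapsto g$ parameterizing $\Sigma_1^{\dot G_0}$ (the case $w=1$ of part (e) of Theorem \ref{first theorem}), by combining invariance arguments with a Jacobian calculation.

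First I would use right-$\dot T$-invariance of Haar measure to extract the product structure. Since $\dot T$ commutes with $H$ and normalizes $N^+$, right multiplication by $t\in\dot T$ acts on the triangular factorization by $(l,m,a,u)\mapsto (l,mt,a,t^{-1}ut)$. Right-$\dot T$-invariance of $d\lambda_{\dot G_0}$ therefore forces $\Psi^*(d\lambda_{\dot G_0})=\nu(l)\,d\lambda_{\dot N^-}(l)\,d\lambda_{\dot T}(m)$ for some density $\nu$, and the same computation shows that $a(g)$ depends only on $l(g)$, so writing the target identity as $\nu(l)=a(l)^{4\delta}$ is well-defined.

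Next I would identify $\nu$ by invoking a Harish-Chandra-style formula on the complex group $G$. On the big cell $\Sigma_1^G=N^-HN^+$, Haar measure (as a real Lie group) pulls back under $(l,h,u)\mapsto lhu$ to (up to a constant) $|h^{2\delta}|^2\,d\lambda_{N^-}(l)\,d\lambda_H(h)\,d\lambda_{N^+}(u)$, where $d\lambda_H$ denotes Haar measure on the complex torus $H=\dot T\dot A$. The real form $\dot G_0\subset G$ is cut out by the reality condition $\sigma(g)=g$, which in Birkhoff coordinates leaves $m\in\dot T$ arbitrary while determining $a$ and $u$ from $(l,m)$ (this is precisely the content of Theorem \ref{first theorem}(c)--(e) for $w=1$). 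Since $|m^{2\delta}|=1$ for $m\in\dot T$, the factor $|h^{2\delta}|^2=|m^{2\delta}|^2 a^{4\delta}$ reduces to $a^{4\delta}$ on $\dot G_0$. Disintegrating $d\lambda_G$ along the transverse-to-$\dot G_0$ directions then produces the claimed formula.

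The main obstacle will be making this disintegration rigorous, in particular verifying that the transverse measures are correctly normalized so that no stray constants survive. A cleaner alternative, which I expect to avoid most of that bookkeeping, is a direct Jacobian calculation of $d\Psi$ at a generic point using right-trivialization of tangent vectors: the reality constraint $\delta g\cdot g^{-1}\in\g_0$ algebraically determines the dependent variations $\delta a,\delta u$ in terms of $\delta l,\delta m$, and the factor $a^{4\delta}$ emerges as $\det\mathrm{Ad}(a)$ acting on the real $2|\Delta^+|$-dimensional space underlying $\n^+$, with the coefficient $4$ (rather than $2$) reflecting that each complex root direction contributes twice over $\mathbb R$.
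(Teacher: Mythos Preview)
Your plan is reasonable and your instincts about where the factor $a^{4\delta}$ comes from are correct, but the route you outline differs from the paper's and leaves more to be filled in than the paper's argument does.

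Your approach (a), restricting the complex Haar measure formula on $\Sigma_1^G$ to $\dot G_0$, is not really a proof strategy as stated: $\dot G_0$ has measure zero in $G$, so ``disintegrating along the transverse directions'' requires choosing a transverse volume and checking it is constant along $\dot G_0$---this is essentially equivalent to the Jacobian calculation you defer to approach (b), so (a) does not actually save work. You are right to flag this as the main obstacle.

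Your approach (b), computing $d\Psi$ directly via the reality constraint $\delta g\cdot g^{-1}\in\g_0$, can be made to work but is more involved than what the paper does. The paper instead exploits left-$\dot G_0$-invariance of Haar measure: the density of $d\lambda_{\dot G_0/\dot T}$ against $d\lambda_{\dot N^-}$ at $l(g)$ is the reciprocal of the Jacobian of the map $N^-\to N^-$, $l_0\mapsto l(g\cdot g_0)$ (where $l(g_0)=l_0$), evaluated at the identity. A short manipulation shows this derivative is $X\mapsto (\mathrm{Ad}_{a(g)u(g)}X)_-$ on $\n^-$, which is lower-triangular in a root basis (the $u(g)$-conjugation only raises height), so its real determinant is $\prod_{\alpha<0}|a(g)^{\alpha}|^2=a(g)^{-4\delta}$. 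This avoids ever having to solve the reality constraint for $\delta a,\delta u$; the dependent variations simply never enter because one works entirely inside $N^-$. Your approach would arrive at the same triangularity after projecting $g^{-1}\delta g$ back to $\n^-\oplus\t$, but the paper's use of left-invariance makes that projection step automatic.
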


\begin{proof}
This is equivalent to proving the coordinate expression
$$d\lambda_{\dot G_0/\dot T}(g \dot T)=a(g)^{4\dot{\delta}}d\lambda_{\dot N^-}(l(g))$$
for the invariant measure on the quotient $\dot G_0/\dot T$. The value of the density of $d\lambda_{\dot G_0/\dot T}$ with respect to $d\lambda_{\dot N^-}(l(g))$ at $l(g)$ can be computed as follows.  Identify the tangent space to $\dot N^-$ at $l(g)$ with $\n^-$ by left translation.  The derivative at $1\in l(\Sigma_1^{\dot{G}_0})\subset N^-$ of left translation by $g\in \dot{G}_0$ is then identified with a linear map from $\n^-$ (viewed as the tangent space to $\dot N^-$ at $1$) to $\n^-$ (viewed as the tangent space at $l(g)$).  The reciprocal of the determinant of this map is the value of the density at $l(g)$.

Given $X\in\n^-$, the curve $\varepsilon\mapsto \exp{\varepsilon X}$ represents the corresponding tangent vector at $1\in \dot N^-$.  Let $\varepsilon\mapsto g_0(\varepsilon)$ denote a lift of this curve to $\dot{G}_0$, i.e., $l(g_0(\varepsilon))=\exp{\varepsilon X}$.  We can arrange for this lift to have $m(g_0(\varepsilon))=1$ for $\varepsilon$ small.  Then
\begin{equation}\label{t_vector}
\varepsilon\mapsto l(g)^{-1}l(gg_0(\varepsilon))
\end{equation}
represents the image of $\varepsilon\mapsto \exp(\varepsilon X)$ under left translation by $g$ through these identifications.  Let $g_0(\varepsilon)=l(\varepsilon)a(\varepsilon)u(\varepsilon)$ denote the triangular factorization of $g_0(\varepsilon)$.  Then
\begin{eqnarray*}
l(g)^{-1}l(gg_0(\varepsilon)) & = & l(g)^{-1}l(l(g)a(g)u(g)l(\varepsilon)a(\varepsilon)u(\varepsilon))\\
& = & l(\mathrm{Ad}_{a(g)u(g)}(l(\epsilon))a(g)u(g)a(\varepsilon)u(\varepsilon)) \\
& = & l(\exp(\varepsilon \mathrm{Ad}_{a(g)u(g)}(X)))
\end{eqnarray*}
so the derivative of (\ref{t_vector}) at $\epsilon=0$ is the linear map
\begin{equation}\label{derivative_map}
X\mapsto (\mathrm{Ad}_{a(g)u(g)}(X))_-
\end{equation}
where $(\cdot)_-$ denotes the projection to $\n^-$ along the triangular decomposition $\g=\n^-+\h+\n^+$.
We claim that the matrix representing (\ref{derivative_map}) in terms of the basis of negative roots is triangular.  Indeed, if $X\in \n^-$ is homogeneous of a given height then $\mathrm{Ad}_{u(g)}(X)=X+X'(g)$ where $X'(g)$ is a sum of terms of strictly greater height than that of $X$ because $u(g)\in N^+$ and thus $\mathrm{Ad}_{u(g)}$ is unipotent.  Therefore, $\mathrm{Ad}_{a(g)u(g)}(X)=\mathrm{Ad}_{a(g)}(X)+X''(g)$ where again $X''(g)$ is a sum of terms of height strictly greater than $\mathrm{height}(X)=\mathrm{height}(\mathrm{Ad}_{a(g)}(X))$ since $a(g)\in A$.  Thus, the determinant of (\ref{derivative_map}) as a real linear transformation is $a(g)$ raised to twice the sum of the negative roots, i.e., $a(g)^{-4\dot{\delta}}$.  Taking the reciprocal gives the desired formula for the density.
\end{proof}

\begin{lemma}\label{nillemma} In the $\zeta$ coordinates
$$d\lambda_{\dot N^-}(l(g))=\prod_{k=1}^{\mathbf n}\frac{\vert d\zeta_k\vert}{(1\pm\vert \zeta_k\vert^2)^{1-\delta(h_{\tau_k})}}$$
where a sign is positive if and only if the corresponding root is of compact type.
\end{lemma}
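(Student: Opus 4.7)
The plan is to pull back Haar measure from $\dot N^-$ via the exponential map. Since $\dot N^-$ is connected, simply connected, and nilpotent, $\exp\colon\n^-\to\dot N^-$ is a diffeomorphism that carries Lebesgue measure in the basis $\{f_{\tau_j}\}_{j=1}^{\mathbf n}$ to (normalized) Haar measure. So writing
\[
l^{(\mathbf n)} = \exp\!\Bigl(\sum_{j=1}^{\mathbf n} c_j(\zeta)\, f_{\tau_j}\Bigr),
\]
we have $d\lambda_{\dot N^-}(l^{(\mathbf n)}) = \prod_{j=1}^{\mathbf n}|dc_j|$, and the problem reduces to computing the real Jacobian of the change of variables $\zeta\mapsto c$.

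The technical core, which I expect to be the main obstacle, is to establish by induction on $\mathbf n$ the following structural claim: each $c_j$ depends only on $\zeta_j,\zeta_{j+1},\dots,\zeta_{\mathbf n}$, has the form $c_j = F_j\cdot\zeta_j + G_j$ with $F_j,G_j$ functions of $\zeta_{j+1},\dots,\zeta_{\mathbf n}$ only, and
\[
F_j = \prod_{k>j}\mathbf a(\zeta_k)^{-\tau_j(h_{\tau_k})},
\]
a positive real number. The base case is the explicit $n=2$ formula $c_1=\zeta_1\mathbf a(\zeta_2)^{-\tau_1(h_{\tau_2})}$, $c_2=\zeta_2$ recorded in the proof of Theorem \ref{root_factorization_g0}. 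For the inductive step the identity $l^{(n)} = \exp(\zeta_n f_{\tau_n})\cdot\mathbf l_n$ with $\mathbf l_n\in \exp(\sum_{j<n}\C f_{\tau_j})$ shows that $\mathbf l_n$ arises from $l^{(n-1)}$ by conjugation by $\mathbf a(\zeta_n)^{h_{\tau_n}}$, which scales each $f_{\tau_j}$ ($j<n$) by $\mathbf a(\zeta_n)^{-\tau_j(h_{\tau_n})}$ and thereby multiplies each $F_j$ by exactly this factor; a supplementary correction from the $\widetilde u_1$ term appearing in the proof of Theorem \ref{root_factorization_g0} must be controlled, and the delicate point is showing that this correction is polynomial in $\zeta_1,\dots,\zeta_{n-1}$ of total degree $\ge 2$, hence contributes nothing to the linear-in-$\zeta_j$ part of any $c_j$.

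Granting the structural claim, the real Jacobian matrix of $\zeta\mapsto c$, viewed as a map $\C^{\mathbf n}\to\C^{\mathbf n}$, is upper triangular with $2\times 2$ diagonal blocks $F_j\cdot I_2$. Its absolute determinant is therefore $\prod_j F_j^2$, and interchanging the order of the double product and using $\mathbf a(\zeta_k)^{-2}=1\pm|\zeta_k|^2$ gives
\[
\prod_j F_j^2 \;=\; \prod_k \mathbf a(\zeta_k)^{-2\sum_{j<k}\tau_j(h_{\tau_k})} \;=\; \prod_k (1\pm|\zeta_k|^2)^{\sum_{j<k}\tau_j(h_{\tau_k})}.
\]

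It remains to simplify the exponent. Directly from the algorithm of Lemma \ref{weylfactorization} and the standard formula for inversion sets of reduced expressions, the set $\{\tau_1,\dots,\tau_{k-1}\}$ is exactly the inversion set of $w_{k-1}' = r_{k-1}\cdots r_1$. Hence by the classical identity $\sum_{\alpha>0,\,w\alpha<0}\alpha = \delta - w^{-1}\delta$, we have $\sum_{j<k}\tau_j = \delta - (w_{k-1}')^{-1}\delta$. Evaluating at $h_{\tau_k}$, using that $w_{k-1}'\tau_k=\gamma_k$ (since $\tau_k=(w_{k-1}')^{-1}\gamma_k$ by construction) and that $\delta(h_{\gamma})=1$ for any simple positive root $\gamma$, one obtains
\[
\sum_{j<k}\tau_j(h_{\tau_k}) \;=\; \delta(h_{\tau_k}) - \delta(h_{\gamma_k}) \;=\; \delta(h_{\tau_k}) - 1.
\]
Substituting back yields $d\lambda_{\dot N^-}(l^{(\mathbf n)}) = \prod_k (1\pm|\zeta_k|^2)^{\delta(h_{\tau_k})-1}|d\zeta_k|$, which is the statement of the lemma. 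As a consistency check, combining this with $a(g)^{4\delta} = \prod_j \mathbf a(\zeta_j)^{4\delta(h_{\tau_j})}$ via Lemma \ref{haarlemma} recovers precisely the $\mathbf a(\zeta_j)^{2\delta(h_{\tau_j})+2}$ exponent asserted in Theorem \ref{haartheorem}.
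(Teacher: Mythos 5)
Your overall strategy---compute the real Jacobian of the passage from the $\zeta$'s to coordinates on $\dot N^-$, then collapse the exponent with $\sum_{j<k}\tau_j(h_{\tau_k})=\delta(h_{\tau_k})-1$---is sound in outline, and your derivation of that identity via inversion sets and $\delta-w^{-1}\delta$ is a correct alternative proof of the paper's Lemma \ref{alglemma}. The gap is precisely in the structural claim you flag as the technical core: in exponential coordinates of the \emph{first} kind, $l^{(\mathbf n)}=\exp(\sum_j c_jf_{\tau_j})$, that claim is not merely unproved but false. Take $\g=\mathfrak{sl}(3,\C)$ with the lexicographic order $\tau_1=\alpha_1$, $\tau_2=\alpha_1+\alpha_2$, $\tau_3=\alpha_2$, so $f_{\tau_1}=E_{21}$, $f_{\tau_2}=E_{31}$, $f_{\tau_3}=E_{32}$ and $[f_{\tau_3},f_{\tau_1}]=E_{31}$. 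Writing $l(g^{(3)})$ as a unipotent lower-triangular matrix with entries $l_{21},l_{31},l_{32}$, one has
\[
l(g^{(3)})=\exp\bigl(c_1E_{21}+c_2E_{31}+c_3E_{32}\bigr),\qquad c_1=l_{21},\quad c_3=l_{32},\quad c_2=l_{31}-\tfrac12\,l_{21}l_{32},
\]
while the paper's induction gives $l_{32}=\zeta_3$, $l_{21}=F_1\zeta_1+(\text{term in }\zeta_2,\zeta_3)$ with $F_1=\mathbf a(\zeta_3)/\mathbf a(\zeta_2)$, and $l_{31}-\zeta_3l_{21}=\mathbf a(\zeta_3)^{-1}\zeta_2$. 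Hence
\[
c_2=\mathbf a(\zeta_3)^{-1}\zeta_2+\tfrac12\zeta_3\,l_{21}=\Bigl(\mathbf a(\zeta_3)^{-1}-\tfrac12\mathbf a(\zeta_3)|\zeta_3|^2\Bigr)\zeta_2+\tfrac12\zeta_3F_1\,\zeta_1+\cdots,
\]
so $c_2$ depends on $\zeta_1$ (destroying the claimed triangularity) and its $\zeta_2$-coefficient is not $F_2=\mathbf a(\zeta_3)^{-1}$. The culprit is the Baker--Campbell--Hausdorff correction $\tfrac12[\zeta_nf_{\tau_n},\cdot\,]$, which is nonzero whenever $\tau_n+\tau_j$ is again a root; your ``total degree $\ge 2$'' criterion does not exclude such terms, and they genuinely alter individual Jacobian entries (the determinant still equals $\prod F_j^2$, but only through a cancellation your argument does not detect).

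The repair is to abandon single-exponential coordinates in favor of the iterated product decomposition that the paper's induction already produces: $l(g^{(n)})=\exp(\zeta_nf_{\tau_n})\,\mathbf l_n$ with $\mathbf l_n\in \dot N^-\cap(w_{n-1}')^{-1}\dot N^+w_{n-1}'$, using that Haar measure on a nilpotent group is the product of the Haar measures of the two factors of such a decomposition. In these coordinates of the second kind the triangular structure you want \emph{does} hold (in the example above, $l_{31}-\zeta_3l_{21}=\mathbf a(\zeta_3)^{-1}\zeta_2$ exactly), but establishing the inductive step still requires the two nontrivial inputs supplied by the paper and absent from your proposal: (i) the map $u\mapsto(u_0uu_0^{-1})_1$ onto the first factor of $\dot N^+=(\dot N^+\cap w_{n-1}'\dot N^-(w_{n-1}')^{-1})(\dot N^+\cap w_{n-1}'\dot N^+(w_{n-1}')^{-1})$ preserves invariant volume, because its derivative is the compression of a unipotent operator; and (ii) conjugation by $\mathbf a(\zeta_n)^{h_{\tau_n}}$ scales the Haar measure of the subgroup by $\prod_{j<n}\mathbf a(\zeta_n)^{-2\tau_j(h_{\tau_n})}$, which is where Lemma \ref{alglemma} enters. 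Your endgame and your consistency check with Theorem \ref{haartheorem} are fine once the Jacobian computation is carried out in these coordinates.
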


\begin{remark} With the same conventions as in Remark \ref{haarremark}, the formula in Lemma \ref{nillemma} can be alternatively written as
\[
d\lambda_{\dot N^-}(l(g))=\prod_{\tau>0}(1\pm | \zeta_{\tau}|^2)^{\delta(h_{\tau})-1}| d\zeta_{\tau}|
\]
In a similar way
\[
a^{4\delta}=\prod_{\tau>0}(1\pm\vert \zeta_{\tau}\vert^2)^{-2\delta(h_{\tau})}.
\]
Together with Lemma \ref{haarlemma}, these formulas immediately imply Theorem \ref{haartheorem} (as formulated in Remark \ref{haarremark}).
\end{remark}

The basic idea of the proof of Lemma \ref{nillemma} is the following. If we write $l(g)=\exp(\sum x_j f_{\tau_j})$, then there is a triangular
relationship between the $\zeta $ variables and the $x$ variables which is implicit in the proof of Theorem \ref{root_factorization_g0}. To carefully prove this, we need to go back through the induction argument in that proof. This
involves an algebraic lemma and a more technical
statement.

\begin{lemma}\label{alglemma} For $n=1,..,\mathbf n$,
$$\delta(h_{\tau_n})-1=\sum_{k=1}^{n-1} \tau_k(h_{\tau_n})$$
\end{lemma}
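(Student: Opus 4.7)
The plan is to recognize the partial sum $S_{n-1} := \sum_{k=1}^{n-1}\tau_k$ as a Weyl-theoretic quantity and then push $h_{\tau_n}$ into a simple coroot via the transport property $w_{n-1}'\tau_n = \gamma_n$, exploiting the $W$-invariance of the canonical pairing $\langle \mu, h\rangle = \mu(h)$ between $\h^*$ and $\h$.

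First I would verify that $\{\tau_1,\dots,\tau_{n-1}\}$ is precisely the inversion set $N(w_{n-1}') := \{\alpha > 0 : w_{n-1}'\alpha < 0\}$ of $w_{n-1}' = \mathbf r_{n-1}\cdots \mathbf r_1$. This is the standard fact for the reduced factorization $r_{n-1}\cdots r_1$, and one checks it by computing
\[
w_{n-1}'\tau_k = r_{n-1}\cdots r_1 \cdot r_1\cdots r_{k-1}\gamma_k = -\,r_{n-1}\cdots r_{k+1}\gamma_k,
\]
which is negative because the trailing product is reduced. Since $|N(w_{n-1}')| = \ell(w_{n-1}') = n-1$, this gives all the inversions. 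The classical formula $\sum_{\beta\in N(w^{-1})}\beta = \delta - w\delta$ (proved by applying $w$ to $2\delta = \sum_{\alpha>0}\alpha$ and bijecting $N(w)\leftrightarrow N(w^{-1})$ via $\alpha\mapsto -w\alpha$) yields, with $w = (w_{n-1}')^{-1}$, the identity
\[
S_{n-1} \;=\; \delta - (w_{n-1}')^{-1}\delta, \qquad \text{equivalently}\qquad w_{n-1}'\delta = \delta + w_{n-1}'S_{n-1}.
\]

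Next I would use the defining relation $\tau_n = (w_{n-1}')^{-1}\gamma_n$, which translates under the $W$-equivariant coroot identification ($wh_\alpha = h_{w\alpha}$) to
\[
h_{\tau_n} = (w_{n-1}')^{-1} h_{\gamma_n}, \qquad w_{n-1}' h_{\tau_n} = h_{\gamma_n}.
\]
Combined with $W$-invariance of the pairing, $\langle w\mu, wh\rangle = \langle \mu,h\rangle$, this gives
\[
\delta(h_{\tau_n}) \;=\; \langle \delta, h_{\tau_n}\rangle \;=\; \langle w_{n-1}'\delta,\, w_{n-1}'h_{\tau_n}\rangle \;=\; (w_{n-1}'\delta)(h_{\gamma_n}).
\]

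Finally I would substitute the expression $w_{n-1}'\delta = \delta + w_{n-1}'S_{n-1}$ obtained in the first step and use $\delta(h_{\gamma_n}) = 1$ (since $\gamma_n$ is simple), together with one more application of $W$-invariance:
\[
(w_{n-1}'\delta)(h_{\gamma_n}) = 1 + (w_{n-1}'S_{n-1})(h_{\gamma_n}) = 1 + S_{n-1}\bigl((w_{n-1}')^{-1}h_{\gamma_n}\bigr) = 1 + S_{n-1}(h_{\tau_n}).
\]
Combining the displayed equations gives $\delta(h_{\tau_n}) - 1 = S_{n-1}(h_{\tau_n}) = \sum_{k=1}^{n-1}\tau_k(h_{\tau_n})$, which is the claim.

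The only genuinely nontrivial ingredient is the identification $S_{n-1} = \delta - (w_{n-1}')^{-1}\delta$; everything else is bookkeeping with the $W$-invariant pairing. I do not expect any obstacle, since the argument is purely Weyl-combinatorial and makes no use of the compact/noncompact root dichotomy (which is consistent with the lemma being an algebraic statement used inside the analytic computation of Lemma \ref{nillemma}).
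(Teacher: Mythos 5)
Your proof is correct and follows essentially the same route as the paper: both identify $\{\tau_1,\dots,\tau_{n-1}\}$ with the inversion set of $w_{n-1}'$, transport everything to $h_{\gamma_n}$ via $W$-invariance of the pairing $\mu(h)$, and reduce to $\delta(h_{\gamma_n})=1$ for the simple root $\gamma_n$. The only cosmetic difference is that you invoke the classical identity $\sum_{\beta\in N(w^{-1})}\beta=\delta-w\delta$ as a packaged lemma, whereas the paper re-derives it inline by splitting $2\delta$ over inverted and non-inverted roots.
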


\begin{proof} Recall $w_{n-1}=r_{n-1}..r_1$. Since $\tau_n=w_{n-1}^{-1}\cdot \gamma_n$, and $Inv(w_{n-1})=\{\tau_k:k<n\}$,
the lemma is equivalent to
$$\frac{1}{2}\left(\sum_{\beta\in Inv(w_{n-1})}w_{n-1}\cdot\beta+\sum_{\beta\notin Inv(w_{n-1})}w_{n-1}\cdot\beta\right)(h_{\gamma_n})-1=\left(\sum_{\beta\in Inv(w_{n-1})}w_{n-1}\cdot\beta\right)(h_{\gamma_n})$$
or
$$\frac{1}{2}\left(\sum_{\beta\in Inv(w_{n-1})}w_{n-1}\cdot(-\beta)+\sum_{\beta\notin Inv(w_{n-1})}w_{n-1}\cdot\beta\right)(h_{\gamma_n})=1$$
The left hand side is $\delta$ applied to a simple coroot, hence it equals one. This completes the proof.
\end{proof}

\begin{lemma} Suppose that $n\le \mathbf n$. As in the statement and proof of Theorem \ref{root_factorization_g0} (with $w=1$), let $g^{(n)}=i_{\tau_n}(g(\zeta_n))..i_{\tau_1}(g(\zeta_1))$.
Then $l(g^{(n)})\in \dot N^-\cap (w'_n)^{-1}\dot N^+w'_n$ and the expression for Haar measure of this nilpotent group, in the $\zeta$ coordinates, is given by
\[
d\lambda_{\dot N^-\cap (w_n')^{-1}\dot N^+w_n'}(l(g^{(n)})) =  \prod_{k\le n}\mathbf a(\zeta_k)^{-2(\dot{\delta}(h_{\tau_k})-1)}\vert d\zeta_k\vert
\]
up to a normalization, where a sign is positive if and only if the corresponding root is of compact type.
\end{lemma}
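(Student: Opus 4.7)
I would proceed by induction on $n$, reducing the lemma to an explicit Jacobian computation. Since Haar on $\dot N^-\cap(\mathbf{w}'_n)^{-1}\dot N^+\mathbf{w}'_n$ is Lebesgue in the logarithmic coordinates $x$ defined by $l(g^{(n)}) = \exp(\sum_k x_k f_{\tau_k})$, the task reduces to computing the real Jacobian of $\zeta\mapsto x$. The base case $n=1$ is immediate: $l(g^{(1)}) = \exp(\zeta_1 f_{\tau_1})$ gives $x_1 = \zeta_1$, and by Lemma \ref{alglemma} the exponent $\dot\delta(h_{\tau_1})-1$ is zero. For the inductive step I use the recursion extracted from the proof of Theorem \ref{root_factorization_g0},
\[
l(g^{(n)}) = \exp(\zeta_n f_{\tau_n}) \cdot \mathbf{a}(\zeta_n)^{h_{\tau_n}}\,A\,\mathbf{a}(\zeta_n)^{-h_{\tau_n}}, \qquad A = (\mathbf{w}'_{n-1})^{-1}\widetilde u_1\mathbf{w}'_{n-1} = \exp\bigl(\textstyle\sum_{k<n} y_k f_{\tau_k}\bigr),
\]
and factor $\zeta\mapsto x$ through the intermediate coordinates $(y_1,\ldots,y_{n-1},\zeta_n)$.

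For the map $(y,\zeta_n)\mapsto x$, Baker--Campbell--Hausdorff in the nilpotent subalgebra $\sum_{k\le n}\C f_{\tau_k}$ yields $x_n = \zeta_n$ (any nontrivial iterated bracket with at least one $f_{\tau_n}$ lies outside $\g_{-\tau_n}$), and $x_k = \mathbf{a}(\zeta_n)^{-\tau_k(h_{\tau_n})}y_k + P_k$ for $k<n$, where $P_k$ depends only on $y_j$ with $\mathrm{height}(\tau_j)<\mathrm{height}(\tau_k)$, because each surviving bracket satisfies $\tau_k = a\tau_n + \sum b_i\tau_i$ with $a\ge 1$. Ordering indices by decreasing height of $\tau$, the complex Jacobian of $y\mapsto x_{<n}$ is upper-triangular with diagonal entries $\mathbf{a}(\zeta_n)^{-\tau_k(h_{\tau_n})}$, contributing (after modulus-squared) the real factor $\prod_{k<n}\mathbf{a}(\zeta_n)^{-2\tau_k(h_{\tau_n})}$ to the Jacobian, while the $\zeta_n\mapsto x_n$ block is the identity. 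For the map $\zeta\mapsto y$ at fixed $\zeta_n,\bar\zeta_n$, I would decompose it as $\zeta\mapsto l(g^{(n-1)})\mapsto\widetilde u\mapsto\widetilde u_1\mapsto A\mapsto y$; every arrow except $\widetilde u\mapsto\widetilde u_1$ is a conjugation or a logarithmic reparametrization and hence Haar-preserving. The remaining map, assigning to $\widetilde u\in\dot N^+_1$ the $\dot N^+_1$-component of $v\widetilde u$ with $v := \exp(\pm\bar\zeta_n e_{\gamma_n})\in\dot N^+_2$, has Jacobian $1$: writing $v u_1 = w_1 w_2$ for the decomposition of $v u_1$ alone, one computes $v u_1 u_2 = w_1 (w_2 u_2)$, so that the $\dot N^+_1$-component of $v u_1 u_2$ depends only on $u_1$; combined with the volume-preservation of left-$v$-translation on $\dot N^+$ and the unit Jacobian of the product map $\dot N^+_1\times\dot N^+_2\to\dot N^+$ (a standard BCH calculation in nilpotent groups), the factorization of the total Jacobian forces $\widetilde u\mapsto\widetilde u_1$ itself to be Haar-preserving. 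Hence by the inductive hypothesis the Jacobian of $\zeta\mapsto y$ is $\prod_{k<n}|c_k^{(n-1)}|^2$.

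Combining the two pieces gives total Jacobian $\prod_{k\le n}|c_k^{(n)}|^2$ with $c_k^{(n)} := \prod_{j=k+1}^n \mathbf{a}(\zeta_j)^{-\tau_k(h_{\tau_j})}$. Reindexing the double product $\prod_k\prod_{j>k}$ as $\prod_j\prod_{k<j}$ and invoking Lemma \ref{alglemma} ($\sum_{k<j}\tau_k(h_{\tau_j}) = \dot\delta(h_{\tau_j})-1$) turns this into $\prod_k\mathbf{a}(\zeta_k)^{-2(\dot\delta(h_{\tau_k})-1)}$, finishing the induction. The main obstacle is establishing the Jacobian-$1$ property of the $\widetilde u\mapsto\widetilde u_1$ extraction—the algebraic heart of the argument—which as sketched above follows from the product-structure identity $v u_1 u_2 = w_1(w_2 u_2)$ together with unimodularity of nilpotent groups, and therefore does not require the (generally false) normality of $\dot N^+_1$ in $\dot N^+$.
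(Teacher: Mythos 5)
Your proof is correct and follows the same inductive skeleton as the paper: peel off $\exp(\zeta_nf_{\tau_n})$, account for the $\mathbf a(\zeta_n)^{h_{\tau_n}}$-conjugation (which rescales the $f_{\tau_k}$-coefficient by $\mathbf a(\zeta_n)^{-\tau_k(h_{\tau_n})}$), reduce the rest to the $(n-1)$-case through the extraction map $\widetilde u\mapsto\widetilde u_1$, and sum the exponents with Lemma \ref{alglemma}. The one place you genuinely diverge is the heart of the matter, the Haar-invariance of $\widetilde u\mapsto\bigl(v\widetilde u\bigr)_1$ on $\dot N^+\cap w_{n-1}'\dot N^-(w_{n-1}')^{-1}$. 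The paper proves this infinitesimally: left-trivializing the tangent bundle, the derivative is $X\mapsto\bigl(\mathrm{Ad}((u_0uu_0^{-1})_2u_0)(X)\bigr)_1$, the compression of a unipotent (height-raising) map, hence of determinant $1$ pointwise. You instead argue globally: the identity $vu_1u_2=w_1(w_2u_2)$ shows that in the coordinates $\dot N^+\cong\dot N^+_1\times\dot N^+_2$ left translation by $v$ has the skew-product form $(u_1,u_2)\mapsto(\phi(u_1),\psi(u_1)u_2)$, and since left translation preserves Haar on $\dot N^+$ (which, by unimodularity of nilpotent groups, is the product of the factor Haar measures) while each fiber map $u_2\mapsto\psi(u_1)u_2$ preserves Haar on $\dot N^+_2$, Fubini forces $\phi$ to preserve Haar on $\dot N^+_1$. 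Both arguments are sound; yours is softer and avoids computing any derivative of the factorization map, at the cost of invoking Weil's product-measure formula for the decomposition $\dot N^+=\dot N^+_1\dot N^+_2$, while the paper's is self-contained and makes visible exactly why the Jacobian is unipotent. You are also somewhat more careful than the paper in spelling out the height-triangularity of the Baker--Campbell--Hausdorff correction terms $P_k$ and the block-triangular structure of the chain $\zeta\mapsto(y,\zeta_n)\mapsto x$ (noting that $y$ depends on $\bar\zeta_n$ but that this does not contaminate the determinant); these points are implicit in the paper. No gaps.
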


\begin{proof} If $n=2$, then by (\ref{2ndcase1}),
\[
l(g^{(2)})=exp(\zeta_2 f_{\tau_2}+a(\zeta_2)^{-\tau_1(h_{\tau_2})}\zeta_1
f_{\tau_1}).
\]
Together with Lemma \ref{alglemma}, this completes the proof for $n=1,2$.

Now suppose $n>2$ and the result holds for $n-1$. We need to revisit how we obtained $l(g^{(n)})$, beginning after line (\ref{induction1}) in the induction step for the proof of
Theorem \ref{root_factorization_g0}.
Recall that
\begin{eqnarray*}
 g^{(n)} & = & \exp(\zeta_n f_{\tau_n})\mathbf a(\zeta_n)^{h_{\tau_n}}(\mathbf w'_{n-1})^{-1}\exp(\pm\bar{\zeta}_n e_{\gamma_n})\mathbf w'_{n-1}\exp(\zeta_{n-1}
f_{\tau_{n-1}})\widetilde l a^{(n-1)}u^{(n-1)} \\
 & = & \exp(\zeta_n f_{\tau_n})\mathbf a(\zeta_n)^{h_{\tau_n}}(\mathbf w'_{n-1})^{-1}\exp(\pm\bar{\zeta}_ne_{\gamma_n}) \widetilde u\mathbf w'_{n-1}a^{(n-1)}u^{(n-1)},
\end{eqnarray*}
where $\widetilde u= \mathbf w'_{n-1}\exp(\zeta_{n-1} f_{\tau_{n-1}})\widetilde l (\mathbf w'_{n-1})^{-1}\in w'_{n-1} N^{-} (w'_{n-1})^{-1} \cap
N^+$, and $\widetilde l \in \dot N^-\cap (w'_{n-2})^{-1} \dot N^{+} w'_{n-2}$. The first term will be the first factor in the ultimate expression for $l(g^{(n)})$. The conjugation by $a(\zeta_n)^{h_{\tau_n}}$  will affect volume,
and we will consider this below. The last term does not affect $l(g^{(n)})$. Consider the product of the other terms, which we rewrite as
\begin{eqnarray}
 &  & (\mathbf w'_{n-1})^{-1}\exp(\pm\bar{\zeta}_ne_{\gamma_n}) \widetilde u \mathbf w'_{n-1} \nonumber \\
 & = &(\mathbf w'_{n-1})^{-1}\exp(\pm\bar{\zeta}_ne_{\gamma_n}) \widetilde u\exp(\mp\bar{\zeta}_ne_{\gamma_n}) \exp(\pm\bar{\zeta}_ne_{\gamma_n}) \mathbf w'_{n-1} \label{rewrite1}
\end{eqnarray}
by inserting the identity.  Since conjugation by $\mathbf w'_{n-1}$ is a group isomorphism from the lower triangular nilpotent group
\[
\dot N^-\cap (w'_{n-1})^{-1}\dot N^+ w'_{n-1}=\exp(\sum_{j< n}\mathbb C f_{\tau_j})
\]
to the upper triangular nilpotent group $\dot N^+\cap w'_{n-1}\dot N^-(w'_{n-1})^{-1}$ (where $\widetilde u$ lives) the Haar measure for the first is pushed to the Haar measure for the second. We now consider the decomposition
\[
\dot N^+=\left(\dot N^+\cap w'_{n-1}\dot N^-(w'_{n-1})^{-1}\right)\left(\dot N^+\cap w'_{n-1}\dot N^+(w'_{n-1})^{-1}\right)
\]
and we write $u=(u)_1(u)_2$ for the corresponding factorization of elements $u\in \dot{N}^+$.

The key fact is that, if we set $u_0=\exp(\pm\bar{\zeta}_ne_{\gamma_n})$, then the map
\begin{equation}\label{volume_preserving_map}
\dot N^+\cap w'_{n-1}\dot N^-(w'_{n-1})^{-1}\to \dot N^+\cap w'_{n-1}\dot N^-(w'_{n-1})^{-1}\text{ defined by }u\mapsto \left(u_0 uu_0^{-1}\right)_1
\end{equation}
preserves the invariant volume. To see this, trivialize the tangent bundle for the nilpotent group $\dot N^+\cap w'_{n-1}\dot N^-(w'_{n-1})^{-1}$ using left translation, and fix $u$. Then derivative for the map (\ref{volume_preserving_map}) at $u$ is identified with the linear transformation
\[
\dot{\mathfrak n}^+\cap w'_{n-1}\dot{\mathfrak n}^-(w'_{n-1})^{-1}\to \dot{\mathfrak n}^+\cap w'_{n-1}\dot{\mathfrak n}^-(w'_{n-1})^{-1}
\]
given by
\begin{equation}\label{derivative_of_volume_preserving_map}
X\to \left.\frac{d}{dt}\right|_{t=0}\left(u_0 uu_0^{-1}\right)_1^{-1}\left(u_0ue^{tX}u_0^{-1}\right)_1.
\end{equation}
Now,
\begin{eqnarray*}
(u_0ue^{tX}u_0^{-1})_1 & = & (u_0uu_0^{-1}u_0e^{tX}u_0^{-1})_1  \\
& = & ((u_0uu_0^{-1})_1(u_0uu_0^{-1})_2u_0e^{tX}u_0^{-1})_1 \\
& = & (u_0uu_0^{-1})_1((u_0uu_0^{-1})_2u_0e^{tX}u_0^{-1}(u_0uu_0^{-1})_2^{-1})_1
\end{eqnarray*}
because the factor map $(\cdot)_1$ is equivariant for left multiplication by the first factor group $\dot{N}^+\cap w_{n-1}'\dot{N}^-(w_{n-1}')^{-1}$ and invariant for right multiplication by the second factor group $\dot{N}^+\cap w_{n-1}'\dot{N}^+(w_{n-1}')^{-1}$.  Consequently, (\ref{derivative_of_volume_preserving_map}) becomes
\begin{eqnarray*}
X & \to & \left.\frac{d}{dt}\right|_{t=0}\left(u_0 uu_0^{-1}\right)_1^{-1}\left(u_0ue^{tX}u_0^{-1}\right)_1 \\
& = & \left.\frac{d}{dt}\right|_{t=0}\left(\mathrm{Ad}((u_0uu_0^{-1})_2u_0)(e^{tX})\right)_1 \\
& = & \left(\mathrm{Ad}((u_0uu_0^{-1})_2u_0)(X)\right)_1
\end{eqnarray*}
where $(\cdot)_1$ now denotes the infinitesimal projection to $\dot{\n}^+\cap w_{n-1}'\dot{\n}^-(w_{n-1}')^{-1}$ in $\dot{\n}^+$.  Since $(u_0uu_0^{-1})_2u_0\in \dot{N}^+$, this is clearly the compression of a unipotent map on $\dot{\n}^+$.  Hence, its determinant is 1 at each point $u$.

Note that
$$\left(\exp(\pm\bar{\zeta}_ne_{\gamma_n}) \widetilde u\right)_1=\left(\exp(\pm\bar{\zeta}_ne_{\gamma_n}) \widetilde u\exp(\mp\bar{\zeta}_ne_{\gamma_n})\right)_1 \exp(\pm\bar{\zeta}_ne_{\gamma_n})$$
because $\exp(\pm\bar{\zeta}_ne_{\gamma_n})$ is in the second factor of the nilpotent group decomposition.
We finally have
$$l(g^{(n)}) = \exp(\zeta_n f_{\tau_n})\mathbf
l$$ where
\[
\mathbf l=\mathbf a(\zeta_n)^{h_{\tau_n}}\mathbf
w_{n-1}' \left(\exp(\pm\bar{\zeta}_ne_{\gamma_n}) \widetilde u\right)_1 (\mathbf w_{n-1}')^{-1}\mathbf
a(\zeta_n)^{-h_{\tau_n}}\in \dot N^- \cap (\mathbf w'_{n-1})^{-1} N^+
\mathbf w'_{n-1}.
\]
When we conjugate by the factor $\mathbf a(\zeta_n)^{h_{\tau_n}}$, we are multiplying the coefficient of $f_{\tau_j}$ by a factor $\mathbf a(\zeta_n)^{-\tau_j(h_{\tau_n})}$. Using the induction step, this implies that the Haar measure in the statement
of the Lemma is of the form
$$\left(\prod_{j=1}^{n-1}\mathbf a(\zeta_n)^{-2\tau_j(h_{\tau_n})}\right)\vert d\zeta_n\vert \left(\prod_{k< n}(1\pm\vert\zeta_k\vert^2)^{\delta(h_{\tau_k})-1}\vert d\zeta_k\vert \right)   $$

Applying Lemma \ref{alglemma} to the first product completes the proof.
\end{proof}

\end{document}